\documentclass[11pt]{article}

\usepackage[a4paper,margin=1in]{geometry}
\usepackage[T1]{fontenc}
\usepackage[utf8]{inputenc}
\usepackage{lmodern}
\usepackage{amsmath,amssymb,amsthm,mathtools}
\usepackage{array}
\usepackage{hyperref}
\usepackage{xurl}
\usepackage{microtype}
\usepackage{booktabs,tabularx}
\usepackage{enumitem}
\usepackage{xcolor}
\newcommand{\YZ}[1]{{#1}}
\newcommand{\GG}[1]{{#1}}
\newcommand{\VV}[1]{{#1}}

\hypersetup{
	colorlinks=true,
	linkcolor=blue,
	citecolor=blue,
	urlcolor=blue
}

\newtheorem{theorem}{Theorem}
\newtheorem{problem}[theorem]{Problem}
\newtheorem{proposition}[theorem]{Proposition}
\newtheorem{lemma}[theorem]{Lemma}
\newtheorem{corollary}[theorem]{Corollary}
\newtheorem{conjecture}[theorem]{Conjecture}
\newtheorem{claim}[theorem]{Claim}

\newtheorem{obs}[theorem]{Observation}
\theoremstyle{definition}

\theoremstyle{remark}
\newtheorem{remark}[theorem]{Remark}

\newcommand{\supp}{\operatorname{supp}}
\newcommand{\rank}{\operatorname{rank}}
\newcommand{\dotcupop}{\mathbin{\dot\cup}}
\newcommand{\foe}{f_{\mathrm{oe}}}

\newcommand{\fo}{f_o}

\newcommand{\E}{\mathbb E}

\newcommand{\F}{\mathbb{F}}

\newcommand{\1}{\mathbf{1}}
\DeclareMathOperator{\rk}{rank}
\DeclareMathOperator{\spn}{span}

\title{Bounds on Odd and Odd-Even Induced Subgraphs}
\author{Qiwen Guo\thanks{Department for Computing, Security and Mathematics, Royal Holloway University of London, UK} \hspace{2mm} 
	Gregory Gutin\thanks{Corresponding author. Department for Computing, Security and Mathematics, Royal Holloway University of London, UK, {\tt g.gutin@rhul.ac.uk}}
	\hspace{2mm} Yiming Hao\thanks{School of Mathematical Sciences and LPMC, Nankai University, PR China} 
	\hspace{2mm}  Yongtang Shi\thanks{Center for Combinatorics and LPMC, Nankai University, PR China } \hspace{2mm}
	Yong Zhang\thanks{Shenzhen Institutes of Advanced Technology, Chinese Academy of Sciences, PR China} \hspace{2mm}
	Yacong Zhou\thanks{Shenzhen Institutes of Advanced Technology, Chinese Academy of Sciences, PR China}
}

\date{}

\begin{document}
	\maketitle
	
	\begin{abstract}
		Let $G$ be an $n$-vertex graph and let $\ell:V(G)\to\mathbb{F}_2$
		prescribe degree parities. A set $S\subseteq V(G)$ is
		$\ell$-admissible if every $v\in S$ has degree congruent to $\ell(v)$
		modulo $2$ in $G[S]$. Let $h_\ell(G)$ be the maximum order of an
		$\ell$-admissible set, set $f_{\mathrm{oe}}(G):=\min_\ell h_\ell(G)$,
		and write $f_o(G):=h_{\mathbf{1}}(G)$, where $\mathbf{1}(v)=1$ for every $v\in V(G).$
		
		We prove three main results for graphs without isolated vertices. First,
		by extending Zeng's odd-cut method to arbitrary parity prescriptions and
		introducing a one-sided completion lemma, we show that
		$h_\ell(G)\ge n/6$ for every $\ell$. Consequently,
		$f_{\mathrm{oe}}(G)\ge n/6$, improving the previous bound $2n/21$.
		Second, for bipartite graphs we derive lower bounds on $f_o(G)$ in terms
		of the $\mathbb{F}_2$-rank of the bipartite adjacency matrix and combine
		them to obtain
		\[
		f_o(G)\ge \left(\frac14+\frac1{256}\right)n=\frac{65}{256}n.
		\]
		Thus, in the bipartite case, the factor $2$ in Scott's bound
		$f_o(G)\ge n/(2\chi(G))$ can be replaced by $128/65<2$.
		Finally, writing $\alpha=\alpha(G)$, a fourth-moment argument gives, for
		$\alpha\ge2$,
		\[
		f_o(G)\ge \frac{\alpha}{2}+\frac{\log_3\alpha}{8}
		-\frac14\log_3\log_3\sqrt{\alpha}.
		\]
		We also construct bipartite graphs satisfying
		\[
		f_o(G)\le \frac{\alpha(G)}2+\log_2\!\bigl(\alpha(G)+1\bigr)+\frac12,
		\]
		showing that the logarithmic additive improvement over Scott's bound
		$f_o(G)\ge\alpha(G)/2$ has the optimal order of magnitude.
	\end{abstract}
	
	\section{Introduction}
	
	All graphs in this paper are without loops and multiple edges.  Let $G$ be a graph with $n$ vertices.
	For $S\subseteq V(G)$ and
	$v\in V(G)$, write
	$
	d_S(v,G):=|N_G(v)\cap S|.
	$
	When the graph is clear, we abbreviate this to $d_S(v)$.
	Let $\ell:V(G)\to\F_2$ be a vertex labelling, interpreted as a prescribed
	degree parity.  A set $S\subseteq V(G)$ and \GG{subgraph $G[S]$ are} called \emph{$\ell$-admissible}
	if
	$
	d_S(v,G)\equiv \ell(v)\pmod 2
	\quad\text{for every }v\in S.
	$
	Define
	$
	h_\ell(G):=
	\max\bigl\{|S|:S\subseteq V(G)\text{ is $\ell$-admissible}\bigr\}
	\mbox{  and }
	\foe(G):=\min_{\ell:V(G)\to\F_2} h_\ell(G).
	$
	For $\ell\equiv 1$, \GG{we call $\ell$-admissible sets and graphs {\em odd} and} 
	define $\fo(G):= h_\ell(G).$ \GG{Thus, the notion of $\ell$-admissibility
	generalises the all-odd degree case
	to the odd-even degree one, and it was first introduced by Gutin and Yeo \cite{GutinYeo2022}.} 
	
	Ai et al. \cite{AiEtAl2026} proved that if the minimum degree $\delta(G)\ge 1$, then $ \foe(G)\ge n/10004$. 
	Using a subgraph counting approach, Gutin, Hao and Zhou~\cite{GHZ} improved the bound to
	$
	\foe(G)\ge 2n/21.
	$
	Zeng~\cite{Zeng} introduced a new approach using odd-cut sets and improved the last bound to  $\fo(G)\ge n/5$
	for the case of $\ell\equiv 1$. In Section \ref{sec:labelled}, we generalize odd-cut sets to labelled-cut sets and prove 
	that $\foe(G)\ge n/6.$ A ``weaker'' bound for the general case is not surprising as there are graphs $G$ for which 
	$\fo(G)=2\foe(G)$ \cite{AiEtAl2026}. Another difference between Zeng~\cite{Zeng} and Section \ref{sec:labelled} is that
	we use a one-sided completion lemma: for a labelled-cut set, prescribed cross parities on one
	side can be completed, by solving a binary linear system, to a two-sided labelled cut of at least the same order as that side. \YZ{The same lemma is used again in Section~\ref{sec:k=2}, and we therefore state and prove it separately in Section~\ref{sec:completion}.}
	
	Solving a long-standing conjecture, Ferber and Krivelevich \cite{FK2022} proved the first linear bound $\fo(G)\ge n/10000$ on $\fo(G)$. They mentioned
	that the coefficient $10^{-4}$ was probably far from optimal. Indeed very recently, Gutin, Hao and Zhou~\cite{GHZ}  and Zeng~\cite{Zeng} significantly improved the 
	Ferber-Krivelevich bound, see the bounds in the previous paragraph. Another long-standing conjecture on the topic is that by Scott \cite{Scott1992}:
	\begin{conjecture}\label{conj:Scott}
		Every graph $G$ with $\delta(G)\geq 1$ satisfies
		$
		\fo(G) \geq n/\chi(G).
		$
	\end{conjecture} 
	Wang and Wu  \cite{WangWu2024} proved that Conjecture \ref{conj:Scott} is false for bipartite graphs. Ning \cite{Ning2026} provided further counterexamples to the conjecture, also only for bipartite graphs. Note that Conjecture \ref{conj:Scott} was an attempt by Scott to strengthen his own result: $\fo(G) \geq n/(2\chi(G)).$  
	This leads to our reformulation of Conjecture \ref{conj:Scott} as an open problem:
	\begin{problem}\label{prob:tk}
		For $k\in \mathbb{N}$, determine $t_k$, where
		$$t_k:=\inf\{t \ge 1:\ \fo(G) \geq n/(tk)  \mbox{ for every graph $G$ with $\chi(G)=k$ and $\delta(G)\ge 1$} \}.$$
	\end{problem}
	Zeng's bound $\fo(G)\ge n/5$ implies that $t_k=1$ for every $k\ge 5$ and $t_i<2$ for $i=3$ and 4. In Section \ref{sec:k=2}, we prove that $t_2<2$ as well. 
	To prove the last bound, we use lower bounds on $\fo(G)$ which involve the binary rank of the bipartite adjacency matrix of $G.$ To the best of our knowledge, using ranks in such bounds has not been done before.  We also believe that such bounds can be useful in future research. 
	
	Scott's bound $\fo(G) \geq n/(2\chi(G))$ immediately follows from his other bound $\fo(G)\ge \alpha(G)/2$. In Section \ref{sec:alpha}, using the fourth moment method, 
	we improve the bound $\fo(G)\ge \alpha(G)/2$ to  
	\GG{$$
		f_o(G)\ \ge\ \frac{\alpha(G)}{2}+\frac{\log_3(\alpha(G))}{8}
		-\frac{\log_3\log_3(\sqrt{\alpha(G)})}{4}.
		$$
 We also construct bipartite graphs $G$ with
		$f_o(G)\le \alpha(G)/2+\log_2(\alpha(G)+1)+1/2$ which implies that the additive
		logarithmic term in the improved bound is of optimal order.
		}

	\section{A One-sided Completion Lemma}\label{sec:completion}
	
	Section~\ref{sec:labelled} and Section~\ref{sec:k=2} both rest on the same mechanism, which we isolate here.  Suppose that, in a bipartite graph, every vertex of one part has the prescribed
	parity of degree into the other part.  The next lemma shows that, by solving a binary linear system, one can pass to an induced subgraph in which the prescribed parity is attained at
	every vertex, and that nothing is lost in the order: the subgraph has at least as many vertices as that part. Lemma~\ref{lem:completion} is the main new mechanism in our
	proof of the main result of Section~\ref{sec:labelled} compared to Zeng's odd-cut argument~\cite{Zeng}. Zeng constructs cuts for which both parts already have the required cross
	parity, whereas here only the vertices of $X$ are initially controlled. 
	
	Throughout this section $B=(X,Y;F)$ is a bipartite graph with parts $X$ and $Y$, and all degrees
	are taken in $B$: for $v\in X\cup Y$ and $S\subseteq X\cup Y$ we abbreviate $d_S(v,B)$ to
	$d_S(v)=|N_B(v)\cap S|$.  In particular $d_Y(x)$ is the degree of $x\in X$ and $d_X(y)$ is the
	degree of $y\in Y$.
	
	\begin{lemma}\label{lem:completion}
		Let $B=(X,Y;F)$ be a bipartite graph and let $\ell:X\cup Y\to\F_2$ be a labelling such that
		$
		d_Y(x)\equiv \ell(x)\pmod 2
		\text{  for every }x\in X.
		$
		Then there are sets $X'\subseteq X$ and $Y'\subseteq Y$ with $|X'|+|Y'|\ge |X|$ such that
		$
		d_{Y'}(x)\equiv \ell(x)\pmod 2
		$
		for every $x\in X'$ and
		$
		d_{X'}(y)\equiv \ell(y)\pmod 2
		$
		for every $y\in Y'$.
	\end{lemma}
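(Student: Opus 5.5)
The plan is to encode the lemma as a binary linear system, following the hint in the surrounding text. We look for $X'=X\setminus S$ and $Y'=Y\setminus T$, and we need $|S|+|T|\le |Y|$ for the order bound.

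\textbf{Setup.} Let $A$ be the $X\times Y$ biadjacency matrix of $B$ over $\F_2$, and let $r\in\F_2^Y$ be the indicator of the ``bad'' set $R=\{y\in Y: d_X(y)\not\equiv \ell(y)\}$. By hypothesis $A\1\equiv \ell|_X$, where $\1$ is the all-ones vector on $Y$. Two facts reduce the lemma to conditions on $S$ and $T$:
\begin{itemize}
\item for $x\in X'$, $d_{Y'}(x)\equiv \ell(x)+d_T(x)$, so we need $(A1_T)_x=0$ for $x\notin S$;
\item for $y\in Y'$, $d_{X'}(y)\equiv d_X(y)+d_S(y)$, so we need $(A^{T}1_S)_y=r_y$ for $y\notin T$.
\end{itemize}
The easiest way to meet the second condition is to set $1_T:=r+A^{T}1_S$. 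This makes $(A^T1_S)_y=r_y$ for every $y\notin T$, at the price of deleting the $Y$-vertices where it fails. The first condition then reads: $(AA^{T}1_S+Ar)_x=0$ for every $x\notin S$.

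\textbf{Step 1 (solving the residual condition).} Let $P=AA^{T}$, a symmetric $X\times X$ matrix whose diagonal is $P_{xx}=d_Y(x)\equiv\ell(x)$. Choose a diagonal matrix $E$ and solve
\[
(P+E)s=Ar .
\]
Any solution $s=1_S$ makes $(Ps+Ar)_x=E_{xx}s_x$, which vanishes for $x\notin S$, as required. I will take $E=\operatorname{diag}(P)+I$, or a similar choice, so that $P+E$ has diagonal exactly $I$. Solvability will come from the classical Gallai-type fact that, for a symmetric matrix $Q$ over $\F_2$, the diagonal $\operatorname{diag}(Q)$ lies in the column space of $Q$. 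This follows from $v^{T}Qv=\operatorname{diag}(Q)\cdot v$ together with $\ker Q=(\operatorname{col}Q)^{\perp}$. I will check the compatibility condition $\ker(P+E)\perp Ar$ directly: for $v\in\ker(P+E)$, expand $v^{T}Ar$ using $r=A^{T}\1_X+\ell|_Y$ (entrywise) and $A\1\equiv\ell|_X$. The hypothesis on $X$ should be exactly what makes this inner product vanish. If the naive choice of $E$ fails, I will adjust $E$ on the entries where the $\ell$-labels enter.

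\textbf{Step 2 (the size bound), which I expect to be the main obstacle.} We need $|S|+|T|\le |Y|$. Nothing above controls this, since $|T|=|r+A^{T}s|$ depends on the particular solution $s$. My first attempt is to exploit the freedom in the affine solution space $s_0+\ker(P+E)$, together with the complementary choice that swaps the roles of keeping and deleting. Concretely, for each solution $s$ I compare $(S,T)$ with the pair obtained by replacing $s$ by $s+v$ for suitable $v\in\ker(P+E)$, and show that one of them satisfies $|S|+|T|\le|Y|$. The second attempt, if that fails, is an extremal argument. Take $(X',Y')$ of maximum $|X'|+|Y'|$ among all pairs satisfying the required parities, and suppose $|X'|+|Y'|<|X|$. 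Then exhibit a local switch that increases the size, again via the symmetric-matrix diagonal lemma applied to the bipartite graph $B[(X\setminus X')\cup(Y\setminus Y')]$ glued to the current solution.

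The third fallback is induction on $|X|+|Y|$, using three operations:
\begin{itemize}
\item a vertex $y\in R$ with no neighbours in $X$ is simply deleted;
\item when $R=\emptyset$ we take $X'=X$ and $Y'=Y$;
\item otherwise we pick $y\in R$ and $x\in N(y)$ and contract the problem by deleting $x$ while moving $y$'s constraint.
\end{itemize}
The difficulty is to avoid the loss of one vertex that a naive deletion of $x$ incurs, and to recover that vertex via the parity of $A\1\equiv\ell|_X$.
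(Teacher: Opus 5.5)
Your proposal has a genuine gap. Step 2 is left open: you list three strategies but carry none of them out. The compatibility check in Step 1 is also only hoped for, not done.

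More seriously, the normalization $\mathbf{1}_T:=r+A^{\top}\mathbf{1}_S$ makes the lemma unreachable. It forces $Y'$ to be the set of \emph{all} $y$ with $d_{X'}(y)\equiv\ell(y)$, so you may only delete $Y$-vertices whose parity is wrong. But $Y$-vertices with the correct parity can still spoil the parity at vertices of $X'$. Here is a counterexample.
\begin{itemize}
\item Take $B=K_{2,2}$ with $X=\{x_1,x_2\}$, $Y=\{y_0,y_1\}$, $\ell(x_1)=\ell(x_2)=\ell(y_0)=0$ and $\ell(y_1)=1$.
\item Then $P=AA^{\top}=0$, $r=(0,1)$ (indexed by $y_0,y_1$) and $Ar=(1,1)^{\top}$.
\item So $(P+E)s=Ar$ forces $E=I$ and $s=\mathbf{1}$. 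This gives $S=X$ and $T=\{y_1\}$, i.e.\ $X'=\emptyset$, $Y'=\{y_0\}$, and $|X'|+|Y'|=1<2=|X|$. Checking the four choices of $S$ directly confirms this is the only admissible one.
\item Yet $X'=X$, $Y'=\emptyset$ satisfies the lemma.
\end{itemize}
So no choice of the diagonal $E$, and no use of freedom in the affine solution set, can give $|S|+|T|\le|Y|$ within your framework.

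Step 1 also fails with the naive choice of $E$. Take the path $y_1x_1y_2x_2y_3$ with $\ell(x_1)=\ell(x_2)=\ell(y_2)=\ell(y_3)=0$ and $\ell(y_1)=1$. With $E=\operatorname{diag}(P)+I$, the column space of $P+E$ is spanned by $(1,1)^{\top}$, while $Ar=(0,1)^{\top}$, so the system is unsolvable.

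The missing idea is to fix the $Y$-side first, and in the opposite direction. Since $A\mathbf{1}_Y=\ell_X$, choose $Y'\subseteq Y$ inclusion-minimal with $A\mathbf{1}_{Y'}=\ell_X$.
\begin{itemize}
\item Then $d_{Y'}(x)\equiv\ell(x)$ holds for \emph{every} $x\in X$, whatever $X'$ turns out to be.
\item Minimality makes the columns of $A_{Y'}$ linearly independent: if a nonempty subset of them summed to $0$, removing it would preserve $A\mathbf{1}_{Y'}=\ell_X$.
\item Hence $A_{Y'}^{\top}z=\ell_{Y'}$ is solvable, and its solution set is an affine space of dimension $|X|-|Y'|$.
\item Choose $|X|-|Y'|$ coordinates $I$ on which a basis of $\ker A_{Y'}^{\top}$ restricts to a basis of $\F_2^{I}$. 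This gives a solution with $z|_I=\mathbf{1}_I$.
\item Then $X':=\supp(z)$ satisfies $|X'|\ge|X|-|Y'|$.
\end{itemize}
This is how the paper proves the lemma. The order bound comes directly from rank--nullity, and no Gallai-type diagonal lemma or extremal/inductive argument is needed.
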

	
	\begin{proof}
		Let $A$ be the $X\times Y$ bipartite adjacency matrix of $B$, viewed over
		$\F_2$, and let $\ell_X$ denote the column vector of labels on $X$.  For $S\subseteq Y$ and
		$x\in X$ we have $(A\1_S)_x\equiv d_S(x)\pmod 2$. Thus, the assumption of this lemma translates to $A\mathbf 1_Y=\ell_X$.
		Let $Y'\subseteq Y$ be an inclusion-minimal set such that
		\begin{equation}\label{3.2}
			A\mathbf 1_{Y'}=\ell_X.
		\end{equation}
        Let $A_{Y'}$ be the submatrix formed by the columns indexed
		by $Y'$. If $\ell_X=0$, then $Y'=\emptyset$ and therefore we are done by setting $X':=X$ and $Y':=\emptyset$. 
		
		Thus, we may assume that $\ell_X\neq 0$ and therefore $Y'\neq \emptyset$. Then, the columns of $A_{Y'}$ are linearly independent, i.e., $\rank(A_{Y'})=|Y'|$. Indeed, if the columns indexed by a nonempty set $Y''\subseteq Y'$ summed to zero (therefore $Y''\subsetneq Y'$ as $\ell_X\neq 0$), then
		\[
		A\mathbf 1_{Y'\setminus Y''}
		=A\mathbf 1_{Y'}-A\mathbf 1_{Y''}
		=\ell_X,
		\]
		contrary to the minimality of $Y'$.  
		
		Let $\ell_{Y'}$ be the vector of labels on $Y'$.  Since $A_{Y'}^{\top}$ has full row
		rank $|Y'|$, the system
		\begin{equation}\label{3.4}
			A_{Y'}^{\top} z=\ell_{Y'} 
		\end{equation}
		has a solution $z_0\in\F_2^X$.  Its solution set is therefore $z_0+\ker A_{Y'}^{\top}$. In addition, by rank-nullity, we have
	\[
			\dim \ker A_{Y'}^{\top}=|X|-\rank (\ker A_{Y'}^{\top})=|X|-|Y'| . \]
		Let $d=|X|-|Y'|$.  If $d=0$, set $I:=\emptyset$ and $z:=z_0$.  Otherwise let $\psi_1,\dots,\psi_d$ be a basis of $\ker A_{Y'}^{\top}$ and let $A'$ be the $d\times |X|$ matrix with rows $\psi_1,\dots,\psi_d$.  As row rank equals column rank, $A'$ has $d$ linearly independent columns. Let $I\subseteq X$ be the set of the corresponding coordinates, so that the $d\times d$ submatrix $A''$ of $A'$ formed by these columns is invertible.  Hence the restriction map $\ker A_{Y'}^{\top}\to\F_2^{I}$, $\psi\mapsto \psi|_I$, is a bijection, and in particular there is $w\in \ker A_{Y'}^{\top}$ with $w|_I=\1_I+z_0|_I$.  Let $z:=z_0+w$. Then, $z$ solves~\eqref{3.4} and $z|_I=\1_I$.
		
		Let $X'=\supp (z)$.  In either case $I\subseteq X'$, and therefore
		\[
			|X'|\ge |I|=|X|-|Y'|, 
		\]
		so that $|X'|+|Y'|\ge |X|$.  Finally, for $x\in X'$, equation~\eqref{3.2} gives
		$
		d_{Y'}(x)\equiv \ell(x)\pmod 2,
		$
		and for $y\in Y'$, the coordinate of~\eqref{3.4} indexed by $y$ gives
		$
		d_{X'}(y)\equiv \ell(y)\pmod 2.
		$
	\end{proof}
	
	\section{Labelled Cuts}\label{sec:labelled}
	
	We first generalize Zeng's odd-cut notion, which is a special case of our notion below when $\ell\equiv 1$. Let $G$ be a graph with a labelling $\ell:V(G)\to\F_2$.  A set $S\subseteq V(G)$ is an \emph{$\ell$-cut set} if it has a partition $S=P\dotcupop Q$ such that $d_Q(v) \equiv \ell(v)\pmod 2 \mbox{ for all } v\in P \mbox{ and } d_P(v) \equiv \ell(v)\pmod 2 \mbox{ for all } v\in Q$. The two parts are allowed to be empty.

	We use the following classical theorem (see, for example,
	Lov\'asz~\cite[Problem~5.17]{Lovasz}).
	
	\begin{theorem}[Gallai's even-partition theorem]\label{thm:gallai}
		Every graph $H$ has a partition $V(H)=U\dotcupop W$ such that both $H[U]$
		and $H[W]$ have all degrees even.
	\end{theorem}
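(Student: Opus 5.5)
The plan is induction on $|V(H)|$. This is the classical argument: we set up a linear-algebra or inductive construction over $\F_2$. If every vertex of $H$ has even degree, we take $U=V(H)$ and $W=\emptyset$. Otherwise we pick a vertex $v$ of odd degree. We then form the graph $H'$ on $V(H)\setminus\{v\}$ from $H-v$ by \emph{complementing} the edges inside $N(v)$: two neighbours of $v$ become adjacent exactly when they were not adjacent in $H$. The induction hypothesis gives a partition $V(H')=U'\dotcupop W'$ with both $H'[U']$ and $H'[W']$ having all degrees even.

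Next we put $v$ back on a suitable side. Let $a=|N(v)\cap U'|$ and $b=|N(v)\cap W'|$; since $d_H(v)=a+b$ is odd, exactly one of $a$ and $b$ is even. We add $v$ to the side whose neighbourhood count is even, say $U=U'\cup\{v\}$ (when $a$ is even) and $W=W'$. Then $v$ has even degree in $H[U]$ by choice.

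The remaining work is to check the parity bookkeeping for $u\neq v$. For $u\in W'$, the degree in $H[W]$ differs from the degree in $H'[W']$ only through the complemented pairs. If $u\notin N(v)$, nothing changed. If $u\in N(v)$, the degree changes by the parity of $|N(v)\cap W'\setminus\{u\}|$, because each such pair was toggled; this equals $b-1$ mod $2$. Since $a$ even forces $b$ odd, $b-1$ is even, so parity is preserved. For $u\in U'$ with $u\in N(v)$, toggling changes the degree by $a-1$, which is odd. Adding the edge $uv$ inside $U$ contributes $+1$, so the total change is $a$, which is even. Vertices in $U'$ not adjacent to $v$ are unaffected. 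The case where $b$ is even is symmetric.

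I expect the main obstacle to be exactly this parity verification: the toggle is chosen so that the two corrections ($a-1$ toggles plus one new edge to $v$ on $v$'s side, and $b-1$ toggles on the other side) are both even. Getting the side choice consistent with the odd degree of $v$ is the crux. Everything else is routine.
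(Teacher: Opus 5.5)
Your argument is correct: placing $v$ on the side where its neighbourhood count is even makes the parity changes $a$ on that side and $b-1$ (resp.\ $a-1$) on the other side both even, which is exactly what the induction needs. The paper gives no proof of this statement; it quotes it as a classical result, citing Lov\'asz's Problem~5.17, and your induction on $|V(H)|$ with complementation of the edges inside $N(v)$ is the standard proof of that classical fact.
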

	
	For $\ell\equiv 1$, Theorem~\ref{thm:labelled-cut} below is Zeng's odd-cut theorem
	\cite[Theorem~1.2]{Zeng}, and its proof is a modification of the proof of \cite[Theorem~1.2]{Zeng}.
	We provide the proof for the sake of paper's self-containment. 
	
	\begin{theorem}\label{thm:labelled-cut}
		Let $G$ be a graph with a labelling $\ell:V(G)\to\F_2$, and let $S$ be an
		$\ell$-cut set.  Then there is a partition
		$
		S=S_0\dotcupop S_1
		$
		such that both $S_0$ and $S_1$ are $\ell$-admissible. 
	\end{theorem}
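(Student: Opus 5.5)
The plan is to reduce the statement to Gallai's even-partition theorem (Theorem~\ref{thm:gallai}) applied to the single graph $G[S]$. The two partitions are linked by swapping the roles of the two sides of the cut on the $Q$-side.

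Fix the partition $S=P\dotcupop Q$ witnessing that $S$ is an $\ell$-cut set. Apply Theorem~\ref{thm:gallai} to $H:=G[S]$ to get $S=T_0\dotcupop T_1$ with $d_{T_j}(v)$ even for every $v\in T_j$, $j=0,1$. Then define
\[
S_0:=(T_0\cap P)\cup(T_1\cap Q),\qquad S_1:=(T_1\cap P)\cup(T_0\cap Q).
\]
Clearly $S=S_0\dotcupop S_1$, so it remains to check that both parts are $\ell$-admissible. By symmetry it suffices to treat $v\in S_0$, in two cases.

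Case $v\in T_0\cap P$. Split the degree as
\[
d_{S_0}(v)=d_{T_0\cap P}(v)+d_{T_1\cap Q}(v).
\]
The cut condition gives $d_{T_1\cap Q}(v)\equiv \ell(v)-d_{T_0\cap Q}(v)$. Therefore
\[
d_{S_0}(v)\equiv \ell(v)+d_{T_0\cap P}(v)+d_{T_0\cap Q}(v)=\ell(v)+d_{T_0}(v)\equiv \ell(v)\pmod 2,
\]
since $d_{T_0}(v)$ is even for $v\in T_0$ (all signs are mod $2$).

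Case $v\in T_1\cap Q$. The same computation applies with the roles of $P,Q$ swapped and $T_1$ in place of $T_0$. We have $d_{S_0}(v)=d_{T_0\cap P}(v)+d_{T_1\cap Q}(v)$, and $d_P(v)\equiv\ell(v)$ turns $d_{T_0\cap P}(v)$ into $\ell(v)+d_{T_1\cap P}(v)$. This leaves $\ell(v)+d_{T_1}(v)\equiv\ell(v)$.

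The vertices of $S_1$ are handled identically with $T_0$ and $T_1$ interchanged. Empty $P$ or $Q$ causes no issue, since the parity conditions are then vacuous or still hold as stated.

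The only real obstacle is finding the right ``twist'' $S_0=(T_0\cap P)\cup(T_1\cap Q)$. Applying Gallai separately to $G[P]$ and $G[Q]$ does not control the cross parities. The key observation is that the cut condition converts the cross-degree into $S_i$ into $\ell(v)$ plus the cross-degree into the other part. Once this twist is chosen, everything reduces to the evenness of $d_{T_j}(v)$ in $G[S]$, and the verification is routine.
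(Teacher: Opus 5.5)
Your proposal is correct and is essentially the paper's proof: you take the same Gallai even partition of $G[S]$ and recombine it with the witnessing cut $P\dotcupop Q$ in the same twisted way, $S_0=(T_0\cap P)\cup(T_1\cap Q)$. Your case-by-case parity computation is the paper's observation $S_i=A\mathbin{\triangle}B$ (Gallai part containing $v$, cut part not containing $v$) written out explicitly.
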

	
	\begin{proof}
		Choose a witnessing partition $S=P\dotcupop Q$.  Apply
		Theorem~\ref{thm:gallai} to $G[S]$ and obtain a partition
		$S=U\dotcupop W$ such that $G[U]$ and $G[W]$ are even.  Set
		$S_0=(U\cap P)\cup(W\cap Q)$
		and 
		$ S_1=(W\cap P)\cup(U\cap Q).$
		Clearly, $S=S_0\dotcupop S_1$.
		
		Fix $i\in\{0,1\}$ and $v\in S_i$.  Let $A\in\{U,W\}$ be the part containing
		$v$, and let $B\in\{P,Q\}$ be the part not containing $v$.  By the
		definition of $S_i$,
		$
		S_i=A\mathbin{\triangle}B,
		$
		where $\triangle$ denotes symmetric difference. 
		Therefore, modulo $2$,
		$
		d_{S_i}(v,G)
		\equiv d_A(v,G)+d_B(v,G)
		\equiv 0+\ell(v)
		\equiv \ell(v).
		$
		The first term is even because $G[A]$ is even, and the second has parity
		$\ell(v)$ because $P\dotcupop Q$ witnesses that $S$ is an $\ell$-cut set.
		Thus both $S_0$ and $S_1$ are $\ell$-admissible.  \end{proof}

	The labelled odd-cut theorem is useful only after a large $\ell$-cut set has been found. For arbitrary labels, imposing the correct cross parity on both parts at once is inconvenient, and this is where Lemma~\ref{lem:completion} is applied: given disjoint $P,Q\subseteq V(G)$ such that every $q\in Q$ has the prescribed parity of degree into $P$, apply the lemma to the
    bipartite graph $G[P,Q]$, whose edges are the edges of $G$ between $P$ and $Q$, with $X:=Q$ and $Y:=P$, to obtain the resulting sets $X'\subseteq Q$ and $Y'\subseteq P$ \VV{that} satisfy $|X'|+|Y'|\ge |Q|$ and are the two parts of an $\ell$-cut set $Y'\dotcupop X'$ of $G$.  Theorem~\ref{thm:labelled-cut} then gives $h_\ell(G)\ge (|X'|+|Y'|)/2\ge |Q|/2$, which is the following corollary.
	
	\begin{corollary}\label{cor:one-sided}
		Let $P,Q\subseteq V(G)$ be disjoint and suppose that
		$
		d_P(q,G)\equiv \ell(q)\pmod 2
		\mbox{ for all } q\in Q.
		$
		Then
		$
		h_\ell(G)\ge |Q|/2.
		$
	\end{corollary}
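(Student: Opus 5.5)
The plan is to follow exactly the route sketched in the paragraph preceding the statement: reduce to Lemma~\ref{lem:completion} and then apply Theorem~\ref{thm:labelled-cut}. Given disjoint $P,Q\subseteq V(G)$ with $d_P(q,G)\equiv\ell(q)\pmod 2$ for all $q\in Q$, I form the bipartite graph $B=G[P,Q]$ with parts $X:=Q$ and $Y:=P$. Its edges are exactly the edges of $G$ between $P$ and $Q$. I take as labelling of $X\cup Y$ the restriction of $\ell$.

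The first step is to check the hypothesis of Lemma~\ref{lem:completion}. For $x\in X=Q$, the degree of $x$ into $Y$ in $B$ equals $|N_G(x)\cap P|=d_P(x,G)$. This is because $B$ contains all $G$-edges between $P$ and $Q$, and $P\cap Q=\emptyset$. Hence $d_Y(x)\equiv\ell(x)$ for all $x\in X$, as required. The lemma then yields $X'\subseteq Q$ and $Y'\subseteq P$ with $|X'|+|Y'|\ge|Q|$ such that
\[
d_{Y'}(x)\equiv\ell(x)\ \text{for } x\in X', \qquad d_{X'}(y)\equiv\ell(y)\ \text{for } y\in Y',
\]
with degrees taken in $B$.

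The second step is to translate these parities back to $G$. Since $X'$ and $Y'$ lie on opposite sides of $B$, for $x\in X'$ we have $d_{Y'}(x,B)=|N_G(x)\cap Y'|=d_{Y'}(x,G)$, and symmetrically for $y\in Y'$. So $S:=Y'\dotcupop X'$ is an $\ell$-cut set of $G$, witnessed by $P_0=Y'$ and $Q_0=X'$; the definition allows empty parts.

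The final step applies Theorem~\ref{thm:labelled-cut} to $S$, giving a partition $S=S_0\dotcupop S_1$ into two $\ell$-admissible sets. One of them has size at least $|S|/2=(|X'|+|Y'|)/2\ge|Q|/2$, so $h_\ell(G)\ge|Q|/2$. The only point needing care, rather than a real obstacle, is the identification of $B$-degrees with $G$-degrees into the opposite side. This identification relies on the disjointness of $P$ and $Q$, so that edges inside $P$ or inside $Q$ never contribute to the relevant counts.
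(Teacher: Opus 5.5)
Your proposal is correct and matches the paper's argument exactly. You apply Lemma~\ref{lem:completion} to $G[P,Q]$ with $X:=Q$, $Y:=P$ to obtain an $\ell$-cut set $Y'\dotcupop X'$ of order at least $|Q|$, then halve it via Theorem~\ref{thm:labelled-cut}; your explicit check that degrees in $G[P,Q]$ into the opposite side coincide with degrees in $G$ is a detail the paper leaves implicit.
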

	
	
	Let $\gamma(G)$ denote the domination number of $G$.  We use the following
	result of Bollob\'as and Cockayne~\cite{BC}; see also~\cite{Haynes}.
	
	\begin{theorem}[Bollob\'as--Cockayne]\label{thm:BC}
		Every graph $G$ without isolated vertices has a minimum dominating set
		$D$ such that, for each $u\in D$, there exists a vertex
		$q_u\in V(G)\setminus D$ satisfying
		\[
		N_G(q_u)\cap D=\{u\}.
		\]
	\end{theorem}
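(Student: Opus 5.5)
I would prove the statement by an extremal choice. Among all minimum dominating sets of $G$, choose one, $D$, for which the number of edges of $G[D]$ is as large as possible. I then claim that this $D$ has the required property. Suppose not: some $u\in D$ has no vertex $q\in V(G)\setminus D$ with $N_G(q)\cap D=\{u\}$. The aim is to produce another minimum dominating set spanning strictly more edges, which contradicts the choice of $D$.

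First I use the minimality of $|D|$. The set $D\setminus\{u\}$ is not dominating, so some vertex $v$ has no neighbour in $D\setminus\{u\}$ and does not lie in $D\setminus\{u\}$. If $v\notin D$, then $v$ must be adjacent to $u$, since $D$ is dominating. Hence $N_G(v)\cap D=\{u\}$, and $v$ would be an external private neighbour of $u$, which was excluded. So $v=u$. This means $u$ has no neighbour in $D$, that is, $u$ is isolated in $G[D]$. Moreover, every neighbour of $u$ outside $D$ has at least one other neighbour in $D$.

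Next I use that $G$ has no isolated vertices. Since $u$ has no neighbours in $D$, it has a neighbour $w\in V(G)\setminus D$. Put $D':=(D\setminus\{u\})\cup\{w\}$, so $|D'|=|D|$. I check that $D'$ dominates $G$:
\begin{itemize}
\item $u$ is dominated by $w$;
\item each vertex of $V(G)\setminus D$ that was dominated by $u$ also has another neighbour in $D\setminus\{u\}\subseteq D'$;
\item every other vertex keeps its dominator.
\end{itemize}
Thus $D'$ is again a minimum dominating set. In $G[D]$ the vertex $u$ contributes no edges. In $G[D']$ the vertex $w$ has at least one neighbour in $D\setminus\{u\}$, because $w$ was not a private neighbour of $u$. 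Hence $G[D']$ has strictly more edges than $G[D]$, contradicting the choice of $D$.

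The only delicate step is the case analysis showing that $u$ must be isolated in $G[D]$. Once that is established, the edge-count comparison is immediate. I expect no real obstacle beyond stating the extremal choice correctly: maximise the number of edges of $G[D]$, rather than, say, minimise it.
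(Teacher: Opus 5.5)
Your argument is correct. Choosing a minimum dominating set $D$ with the largest number of edges in $G[D]$, showing that a vertex $u$ without an external private neighbour must be isolated in $G[D]$, and swapping $u$ for a neighbour $w\notin D$ (which has another neighbour in $D$) is the standard proof of this theorem. The paper does not reprove the result but cites it from Bollob\'as--Cockayne and Haynes et al., where essentially this same extremal argument appears.
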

	
	The vertices $q_u$ are external private neighbours and are necessarily
	distinct.
	
	\begin{proposition}\label{prop:gamma}
		Let $G$ have no isolated vertices and let $\ell:V(G)\to\F_2$.  Then
		$ h_\ell(G)\ge \gamma(G)/2.$
	\end{proposition}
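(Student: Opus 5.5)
Take $D$ to be a minimum dominating set as in Theorem~\ref{thm:BC}, with external private neighbours $q_u$ ($u\in D$). These are pairwise distinct, so $Q_0:=\{q_u:u\in D\}$ has $|Q_0|=\gamma(G)$, and $Q_0\cap D=\emptyset$. The plan is to find a set $P\subseteq D$ and a set $Q\subseteq Q_0$ with $|Q|=|Q_0|=\gamma(G)$ and $P\cap Q=\emptyset$, such that
\[
d_P(q,G)\equiv \ell(q)\pmod 2\quad\text{for every } q\in Q.
\]
Corollary~\ref{cor:one-sided} then gives $h_\ell(G)\ge |Q|/2=\gamma(G)/2$ directly.

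The choice of $P$ exploits privacy. For each $q_u$ we have $N_G(q_u)\cap D=\{u\}$, so for any $P\subseteq D$
\[
d_P(q_u,G)=|N_G(q_u)\cap P| = [u\in P].
\]
So put
\[
P:=\{u\in D:\ \ell(q_u)=1\}.
\]
This gives $d_P(q_u)\equiv \ell(q_u)$ for every $u\in D$. Disjointness holds because $P\subseteq D$ and $Q_0\cap D=\emptyset$. Now apply Corollary~\ref{cor:one-sided} with this $P$ and $Q:=Q_0$.

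The only point needing care is that the corollary requires nothing of the vertices of $P$, nor of the edges inside $Q$. Both are handled inside the corollary: Lemma~\ref{lem:completion} is applied to the bipartite graph $G[P,Q]$, where only $P$–$Q$ edges count, and Theorem~\ref{thm:labelled-cut} then deals with the edges inside the resulting $\ell$-cut set. So no real obstacle remains. The step that does the work is the one-sided parity control coming from the private-neighbour property of Theorem~\ref{thm:BC}.
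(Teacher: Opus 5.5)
Your proof is correct and is the paper's argument: take the Bollob\'as--Cockayne dominating set $D$ with one external private neighbour $q_u$ for each $u\in D$, set $P=\{u\in D:\ell(q_u)=1\}$ and $Q=\{q_u:u\in D\}$, observe that $d_P(q_u,G)=\ell(q_u)$ by privacy, and apply Corollary~\ref{cor:one-sided}. Your checks that the $q_u$ are distinct and that $P\cap Q=\emptyset$ spell out details the paper leaves implicit.
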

	\begin{proof}
		Choose a minimum dominating set $D$ as in Theorem~\ref{thm:BC}, and choose
		one external private neighbour $q_u$ for each $u\in D$.  Put
		$
		Q:=\{q_u:u\in D\}
		$
		and
		$
		P:=\{u\in D:\ell(q_u)=1\}.
		$
		For every $u\in D$, the private-neighbour property gives
		$d_P(q_u,G)=\ell(q_u).$ Thus, $d_P(q_u,G)\equiv\ell(q_u)\pmod2$ for all $q_u\in Q$.  Since
		$|Q|=|D|=\gamma(G)$, Corollary~\ref{cor:one-sided} yields the result.
	\end{proof}
	
		Note that in the all-odd case, Zeng takes all of $D$ and one private neighbour for each $u\in D$. The resulting $2\gamma(G)$ vertices form an odd-cut set and give an odd induced subgraph of order at least $\gamma(G)$.  Under an arbitrary prescription, taking all of $D$ no longer gives the correct cross parity at a private neighbour labelled $0$. The price of arbitrary labels is that the guaranteed contribution is $\gamma(G)/2$ rather than $\gamma(G)$. At the end, this leads to $n/6$ rather than $n/5.$

	\begin{theorem}\label{thm:pointwise}
		Let $G$ be an $n$-vertex graph without isolated vertices and let
		$\ell:V(G)\to\F_2$.  Then
		\[
		h_\ell(G)\ge
		\max\left\{
		\frac{\gamma(G)}2,
		\frac{n-\gamma(G)}4
		\right\}.
		\]
		In particular,
		\VV{$
		h_\ell(G)\ge n/6,
		$ and therefore $\foe(G)\ge n/6$.}
	\end{theorem}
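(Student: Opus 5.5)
The first term of the maximum is Proposition~\ref{prop:gamma}, so the work is to show $h_\ell(G)\ge (n-\gamma(G))/4$. Once that is done, the final claim follows by balancing the two bounds. If $\gamma(G)\ge n/3$ then $\gamma(G)/2\ge n/6$; otherwise $(n-\gamma(G))/4> (2n/3)/4=n/6$. Taking the minimum over $\ell$ then gives $\foe(G)\ge n/6$.

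For the second bound I would use Corollary~\ref{cor:one-sided} with a set $Q$ of size about $(n-\gamma(G))/2$. Take a minimum dominating set $D$ and set $R:=V(G)\setminus D$, so $|R|=n-\gamma(G)$. Every $r\in R$ has a neighbour in $D$. I want disjoint $P,Q$ such that every $q\in Q$ has $d_P(q)\equiv\ell(q)\pmod 2$ and $|Q|\ge |R|/2$.

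The intended construction is randomised, in the spirit of Zeng. Choose $P\subseteq D$ by including each vertex of $D$ independently with probability $1/2$, and let $Q:=\{r\in R: d_P(r)\equiv \ell(r)\pmod 2\}$. Since $d_D(r)\ge1$, the parity of $|N(r)\cap P|$ is uniform. Hence each $r\in R$ lies in $Q$ with probability exactly $1/2$, and $\E|Q|=|R|/2$. Fix a choice with $|Q|\ge|R|/2$. The sets $P\subseteq D$ and $Q\subseteq R$ are disjoint and satisfy the hypothesis of Corollary~\ref{cor:one-sided}, which gives $h_\ell(G)\ge |Q|/2\ge (n-\gamma(G))/4$.

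Only two points need care. First, every vertex of $R$ must have a neighbour in $D$; this holds by domination, and $D$ is disjoint from $R$ by definition. Second, the uniform-parity claim: conditioning on all choices except one fixed neighbour $u\in N(r)\cap D$, toggling $u$ flips the parity. I do not expect a real obstacle. A deterministic alternative is to pick $P$ by the method of conditional expectations, or to linearise: for each $r$ exactly one of the two values of its parity indicator is good, and averaging over $P$ and its complement-within-$N(r)\cap D$ pairing gives the same bound.
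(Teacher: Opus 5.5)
Your proof is correct and follows the paper's argument essentially verbatim: a uniformly random $P\subseteq D$ for a minimum dominating set $D$, and the set $Q$ of vertices outside $D$ with the right parity into $P$ (each is present with probability $1/2$, by toggling one fixed neighbour in $D$), followed by Corollary~\ref{cor:one-sided}; the paper balances the two bounds via the convex combination $\frac13\cdot\frac{\gamma(G)}{2}+\frac23\cdot\frac{n-\gamma(G)}{4}=\frac n6$, which is equivalent to your case split. The deterministic ``pairing'' aside at the end is unnecessary and, as phrased, slightly off: complementing $P$ within $N(r)\cap D$ flips the parity only when $|N(r)\cap D|$ is odd, whereas pairing $P$ with $P\triangle\{u\}$ for one fixed neighbour $u$ is what works. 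You can simply drop it.
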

	\begin{proof}
     By \VV{Proposition}~\ref{prop:gamma}, we only need to show $
	h_\ell(G)\ge \frac{n-\gamma(G)}{4}$. 
		
		\begin{claim}\label{cl:n-gamma/4}
			$h_\ell(G)\ge \frac{n-\gamma(G)}{4}$. 
		\end{claim}
		
		\begin{proof}
					Let $D$ be any minimum dominating set, so $|D|=\gamma(G)$.  Choose a random
			set $Z\subseteq D$ by including every vertex independently with probability
			$1/2$, and $Q_Z:=\bigl\{v\in V(G)\setminus D: d_Z(v,G)\equiv\ell(v)\pmod2\bigr\}.$
			
			Since $D$ dominates $G$, every $v\in V(G)\setminus D$ has a neighbour in $D$. \VV{Fix such a neighbour $x_v\in D$; after all the choices for the vertices of $D\setminus\{x_v\}$ have been exposed, exactly one of the two possible choices for $x_v$ makes $d_Z(v,G)$ congruent to $\ell(v)$ modulo~2.} Thus, we have for every $v\in V(G)\setminus D$, $\Pr(v\in Q_Z)=\frac12$.
			By linearity of expectation,
			$
			\mathbb E|Q_Z|=\frac{n-\gamma(G)}2,
			$
			and therefore some choice of $Z$ satisfies
			\[
			|Q_Z|\ge \frac{n-\gamma(G)}2.
			\]
			For that choice, the pair $(Z,Q_Z)$ satisfies the hypothesis of
			Corollary~\ref{cor:one-sided} (with $P:=Z$).  Therefore
			\[
			h_\ell(G)
			\ge \frac{|Q_Z|}{2}
			\ge \frac{n-\gamma(G)}4,
			\]
		completing the proof of the claim.
		\end{proof}
		
		Thus, combining Proposition \ref{prop:gamma} and Claim \ref{cl:n-gamma/4}, we have
		\[
		h_\ell(G)\ge
		\max\left\{
		\frac{\gamma(G)}2,
		\frac{n-\gamma(G)}4
		\right\}\geq \frac{1}{3}\cdot \frac{\gamma(G)}{2}+\frac{2}{3}\cdot\frac{n-\gamma(G)}{4}=\frac{n}{6},
		\]
		completing the proof.
	\end{proof}

	\section{Proving $t_2<2$ Using Rank Bounds}\label{sec:k=2}
	Let $G$ be a bipartite graph with parts $X$ and $Y$, $|X|+|Y|=n$. Let $M\in\mathbb{F}_2^{X\times Y}$ denote its bipartite adjacency matrix and $k=\rk_{\mathbb{F}_2}(M)$. Write
	$$
	\mathcal{X}=\{Mt : t\in\mathbb{F}_2^{Y}\}\subseteq\mathbb{F}_2^{X},
	\qquad
	\mathcal{Y}=\{M^{\top}s : s\in\mathbb{F}_2^{X}\}\subseteq\mathbb{F}_2^{Y},
	$$
	so that $\dim\mathcal{X}=\dim\mathcal{Y}=k$.
	
	We use the following notation throughout. For $S\subseteq X$ we write $\mathbf{1}_S\in\mathbb{F}_2^{X}$ for its characteristic vector, and similarly for subsets of $Y$, and thus $\supp(\mathbf{1}_S)=S$. All inner products $\langle u,v\rangle=\sum_i u_iv_i$ are taken over $\mathbb{F}_2$, and $W^{\perp}=\{v:\langle v,w\rangle=0\text{ for all }w\in W\}$, the ambient space being clear from the context. For $A\subseteq X$ we let $M_A$ be the submatrix of $M$ formed by the rows indexed by $A$, and for $B\subseteq Y$ we let $M_B$ be the submatrix formed by the columns indexed by $B$ (no ambiguity arises, as $A\subseteq X$ and $B\subseteq Y$).
	
	\subsection{Bound $\fo(G)\ge n/2-2k$}
	
	\begin{obs}\label{obs:oddpairs}
		Let $S\subseteq X$ and $T\subseteq Y$. $G[S\cup T]$ is odd if and only if
		$\supp(\mathbf{1}_S)\subseteq\supp(M\mathbf{1}_T)$ and
		$\supp(\mathbf{1}_T)\subseteq\supp(M^{\top}\mathbf{1}_S)$.
	\end{obs}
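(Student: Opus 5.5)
The plan is to unwind the definitions directly, using the bipartite structure so that each vertex's neighbourhood in $G[S\cup T]$ lies entirely on the opposite side. For $x\in S$ we have $N_G(x)\subseteq Y$, so
\[
d_{S\cup T}(x,G)=d_T(x,G)=|N_G(x)\cap T|\equiv (M\mathbf{1}_T)_x \pmod 2,
\]
because $(M\mathbf{1}_T)_x=\sum_{y\in T}M_{x,y}$ counts the neighbours of $x$ in $T$. Likewise, for $y\in T$ we have $d_{S\cup T}(y,G)=d_S(y,G)\equiv (M^{\top}\mathbf{1}_S)_y\pmod 2$.

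Next I translate ``odd'' coordinatewise. By definition, $G[S\cup T]$ is odd exactly when every vertex of $S\cup T$ has odd degree in it. By the identities above, this says that $(M\mathbf{1}_T)_x=1$ for all $x\in S$ and $(M^{\top}\mathbf{1}_S)_y=1$ for all $y\in T$. Over $\mathbb{F}_2$, a coordinate equals $1$ precisely when it lies in the support. So the two conditions are exactly $S\subseteq\supp(M\mathbf{1}_T)$ and $T\subseteq\supp(M^{\top}\mathbf{1}_S)$, and we have $\supp(\mathbf{1}_S)=S$ and $\supp(\mathbf{1}_T)=T$. Every step is an equivalence, so both directions follow at once.

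There is no real obstacle here. The only point requiring care is that bipartiteness ($S\subseteq X$, $T\subseteq Y$, and no edges inside $X$ or inside $Y$) is what lets the degree inside $S\cup T$ reduce to the degree into the opposite side. The empty case, where $S$ or $T$ is empty, is vacuously consistent with the statement.
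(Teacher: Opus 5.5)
Your proof is correct. The paper states this as an observation without proof, and your direct unwinding is exactly the intended argument: bipartiteness reduces $d_{S\cup T}(x)$ to $d_T(x)\equiv (M\mathbf{1}_T)_x \pmod 2$, and symmetrically for $y\in T$. The paper itself uses this same reasoning explicitly in the proofs of Lemma~\ref{lem:completion} and Lemma~\ref{lem:onesided}.
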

	
	\begin{lemma}\label{lem:duality}
		$(\ker M)^{\perp}=\mathcal{Y}$ and
		$(\ker M^{\top})^{\perp}=\mathcal{X}$.
	\end{lemma}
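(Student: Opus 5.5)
This is the standard fact that, over any field, the orthogonal complement of the kernel of a matrix is its row space. Over $\F_2$ the bilinear form $\langle\cdot,\cdot\rangle$ is not positive definite, so we must not argue via a decomposition $V = W\oplus W^\perp$. Instead we argue by one inclusion plus a dimension count, using only that $\langle\cdot,\cdot\rangle$ is nondegenerate on $\F_2^{Y}$. Nondegeneracy gives $\dim W^{\perp}=|Y|-\dim W$ for every subspace $W\subseteq\F_2^{Y}$.

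For the first identity, we begin with the inclusion $\mathcal{Y}\subseteq(\ker M)^{\perp}$. Take $M^{\top}s\in\mathcal{Y}$ and $t\in\ker M$. Then
\[
\langle M^{\top}s,t\rangle=\langle s,Mt\rangle=\langle s,0\rangle=0 ,
\]
by the adjoint identity $\langle M^{\top}s,t\rangle=s^{\top}Mt$, which holds entrywise over any commutative ring.

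Next we compare dimensions. By rank–nullity, $\dim\ker M=|Y|-k$. Hence
\[
\dim(\ker M)^{\perp}=|Y|-(|Y|-k)=k=\dim\mathcal{Y},
\]
where $\dim\mathcal{Y}=k$ because row rank equals column rank (as already recorded above). The inclusion between subspaces of equal finite dimension is then an equality.

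The second identity follows by applying the first to $M^{\top}$, an $Y\times X$ matrix of the same rank $k$: this gives $(\ker M^{\top})^{\perp}=\{Mt:t\in\F_2^Y\}=\mathcal{X}$.

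The only point needing care is the justification of $\dim W^{\perp}=|Y|-\dim W$ over $\F_2$. We obtain it by choosing a basis $w_1,\dots,w_m$ of $W$ and noting that $W^{\perp}$ is the kernel of the $m\times|Y|$ matrix with rows $w_i$. This matrix has rank $m$, so rank–nullity gives the claim. No step is a real obstacle; the plan is to avoid any reliance on $W\cap W^{\perp}=\{0\}$, which fails over $\F_2$.
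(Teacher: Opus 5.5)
Your proposal is correct and follows essentially the same route as the paper: the inclusion $\mathcal{Y}\subseteq(\ker M)^{\perp}$ via $\langle M^{\top}s,u\rangle=\langle s,Mu\rangle$, then rank--nullity to match dimensions, with the second identity obtained by applying the first to $M^{\top}$. Your explicit justification of $\dim W^{\perp}=|Y|-\dim W$ over $\F_2$ (as the kernel of a full-row-rank matrix) fills in a step the paper leaves implicit.
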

	
	\begin{proof}
		For all $s\in\mathbb{F}_2^{X}$ and $u\in\ker M$, exchanging the order of
		summation,
		\[
		\langle M^{\top}s,\,u\rangle
		=\langle s,\,Mu\rangle=0,
		\]
		so $\mathcal{Y}\subseteq(\ker M)^{\perp}$. By rank--nullity
		we have $\dim\ker M=|Y|-k$, hence $\dim(\ker M)^{\perp}\leq k=\dim\mathcal{Y}$, and therefore the inclusion is an equality.
		The second statement is the first applied to $M^{\top}$.
	\end{proof}
	
	\begin{lemma}\label{lem:boundintwoways}
		Let $x^{*}\in\mathcal{X}$ and $y^{*}\in\mathcal{Y}$ be of maximum support size among all vectors in $\mathcal{X}$ and $\mathcal{Y}$. Let $A=\supp(x^*)$, $B=\supp(y^*)$, $k_A=\rk M_A$ and
		$k_B=\rk M_B$. Then
		\[
		\fo(G)\ \ge\ |\supp(x^*)|+|\supp(y^*)|-(k_A+k_B).
		\]
	\end{lemma}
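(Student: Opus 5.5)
The plan is to build an odd induced subgraph on (essentially) $A\cup B$ and delete at most $k_A$ vertices of $A$ and at most $k_B$ vertices of $B$ to repair parities. Each repair set will be inclusion-minimal for a linear condition and hence have linearly independent columns, giving the rank bounds. This is the mechanism already used in the proof of Lemma~\ref{lem:completion}.

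First, write $x^*=Mt^*$ and $y^*=M^{\top}s^*$. By Observation~\ref{obs:oddpairs} I need $S_0\subseteq X$ and $T_0\subseteq Y$ with $S_0\subseteq\supp(M\mathbf 1_{T_0})$ and $T_0\subseteq\supp(M^{\top}\mathbf 1_{S_0})$. I will aim for $S_0\subseteq A$ and $T_0\subseteq B$, and the conditions are then linear on $A$ and on $B$. The vertices of $A$ need odd degree into $T_0$, i.e.\ $M_A\mathbf 1_{T_0}=\mathbf 1_A$ restricted to $S_0$. The vertices of $B$ need odd degree into $S_0$, i.e.\ $(M^{\top})_B\mathbf 1_{S_0}=\mathbf 1_B$ restricted to $T_0$.

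Second, I would use maximality to control the parities of the ``natural'' choice $S_0=A$, $T_0=B$. Let $u:=M\mathbf 1_B\in\mathcal X$ and $v:=M^{\top}\mathbf 1_A\in\mathcal Y$. Since $x^*+u\in\mathcal X$ has support at most $|A|$, the vector $u$ must have at least as many ones inside $A$ as outside it, and symmetrically for $v$ and $B$. More usefully, the set of $t\in\F_2^{Y}$ with $(Mt)_A=\mathbf 1_A$ is an affine space, namely the coset $t^*+\ker M_A$. Every member of the corresponding coset $x^*+M(\ker M_A)$ vanishes on... no, rather every member equals $\mathbf 1$ on $A$, so by maximality each such vector $Mt$ is supported exactly on $A$. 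I would therefore replace the requirement ``odd degree into $T_0$'' by choosing $T_0$ inside this coset. Concretely, take an inclusion-minimal $T_1\subseteq B$ for which $M_A\mathbf 1_{B\setminus T_1}$ agrees with $\mathbf 1_A$ on the largest possible part of $A$. Equivalently, correct $\mathbf 1_B$ by a vector of $\ker M_A$ plus a minimal set of columns. Minimality gives $|T_1|\le\rk M_A=k_A$ exactly as in Lemma~\ref{lem:completion}. Symmetrically, I would find $S_1\subseteq A$ with $|S_1|\le k_B$ fixing the parities at $B$.

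Third, I would set $S_0=A\setminus S_1$ and $T_0=B\setminus T_1$ and check Observation~\ref{obs:oddpairs}. This yields $|S_0|+|T_0|\ge|A|+|B|-k_A-k_B$.

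The main obstacle is the interaction between the two repairs. Removing $S_1$ from the $X$-side changes the parities of $B$-vertices that were already arranged, and removing $T_1$ changes those of $A$-vertices. To handle this, I would first try to solve both conditions simultaneously as one linear system over $\F_2$ in the unknowns $(\mathbf 1_{S_0},\mathbf 1_{T_0})$, with an affine solution space. I would then apply the ``choose coordinates $I$ forming an invertible minor and force them to $1$'' trick from Lemma~\ref{lem:completion}. That trick shows some solution has at least $|A|+|B|-k_A-k_B$ ones among the coordinates of $A\cup B$, since the constraint matrix has rank at most $k_A+k_B$. If the coupled system fails to be consistent, the fallback is to use the maximality of $x^*$ and $y^*$ to show that $\mathbf 1_A$ and $\mathbf 1_B$ lie in the relevant column spaces.
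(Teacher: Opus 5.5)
Your target is the right one: sets $S_0\subseteq A$ and $T_0\subseteq B$ with $M\mathbf 1_{T_0}$ equal to $\mathbf 1$ on $A$ and $M^{\top}\mathbf 1_{S_0}$ equal to $\mathbf 1$ on $B$, followed by the invertible-minor trick. But the step carrying all the weight is never proved. You need some $t$ with $\supp(t)\subseteq B$ and $(Mt)|_A=\mathbf 1_A$, and symmetrically some $s$ with $\supp(s)\subseteq A$ and $(M^{\top}s)|_B=\mathbf 1_B$.

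Your second step uses only the maximality of $x^*$. That shows any $t$ with $(Mt)|_A=\mathbf 1_A$ has $Mt=x^*$, but says nothing about whether such a $t$ can be supported inside $B$. Your phrase ``agrees with $\mathbf 1_A$ on the largest possible part of $A$'' concedes exactly this. If agreement is only partial, you must discard the disagreeing vertices of $A$, and nothing bounds that loss. Moreover, the minimality argument of Lemma~\ref{lem:completion} bounds $|T_1|$ by a rank only when an exact solution exists. Deferring this to a fallback that ``uses the maximality of $x^*$ and $y^*$'' leaves the heart of the lemma unargued.

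The missing idea is a \emph{cross} use of maximality: the $t$-system is solved by the maximality of $y^*$, not of $x^*$.
\begin{itemize}
\item If a nonzero $w\in\mathcal Y$ vanished on $B$, then $\supp(y^*+w)=B\cup\supp(w)\supsetneq B$, a contradiction.
\item So restriction $\mathcal Y\to\F_2^{B}$ is injective, i.e.\ $\rk M_B=k$.
\item Hence the columns of $M$ indexed by $B$ span all of $\mathcal X\ni x^*$, giving $t$ with $\supp(t)\subseteq B$ and $Mt=x^*$.
\end{itemize}
The paper proves the same fact via $(\ker M)^{\perp}=\mathcal Y$ (Lemma~\ref{lem:duality}). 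The $s$-system follows symmetrically from the maximality of $x^*$. (Incidentally, this shows $k_A=k_B=k$.)

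Second, the ``main obstacle'' you name does not exist. In $G[S_0\cup T_0]$, the degree of a vertex of $S_0$ depends only on $T_0$, and that of a vertex of $T_0$ only on $S_0$. Once $M\mathbf 1_{T_0}$ is $\mathbf 1$ on \emph{all} of $A$ and $M^{\top}\mathbf 1_{S_0}$ is $\mathbf 1$ on all of $B$, any such pair works by Observation~\ref{obs:oddpairs}. So removing vertices on one side never disturbs what was arranged for that side, and the two repairs are completely independent. The paper exploits exactly this: it takes a solution supported in $B$ and adds an element of $\ker M_B$ to force at least $|B|-k_B$ ones, and handles $s$ separately in the same way.

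With consistency in hand, either your minimal-$T_1$ argument or your linear system would also finish the proof. That system splits into two independent systems, governed by the submatrix of $M$ on rows $A$ and columns $B$ and by its transpose, of total rank at most $k_A+k_B$. So what is missing is precisely the consistency argument above, and that is the nontrivial content of the lemma.
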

	\begin{proof}
		If $k=0$ then $M=0$, so $x^{*}=y^{*}=0$ and $A=B=\emptyset$. Thus $|\supp(x^*)|=|\supp(y^*)|=k_A=k_B=0$, the asserted inequality reads $\fo(G)\ge 0$, and we are done. Thus, we assume $k\geq 1$ and therefore $x^*, y^*\neq 0$. We first show that the following claim holds.
		\begin{claim}\label{cl:key}
			There exists $t\in\mathbb{F}_2^{Y}$ ($s\in\mathbb{F}_2^{X}$, resp.) with $Mt=x^*$ ($M^{\top} s=y^*$) such that  (i) $\supp(t)\subseteq B$ ($\supp(s)\subseteq A$); (ii) $|\supp(t)|\geq |B|-k_B$ ($|\supp(s)|\geq |A|-k_A$).
		\end{claim}
		\begin{proof}
			We only show the claim for $x^*$ and $t$, as the argument for the other is similar, with $M$, $\mathcal{Y}$, $B$ replaced by $M^{\top}$, $\mathcal{X}$, $A$.
			
			(i): Let $\bar B:= Y\setminus B$. Take an arbitrary $t_1$ with $Mt_1=x^*$. If $t_1|_{\bar B}=0$ we are done and therefore we assume that $t_1|_{\bar B}\neq 0$. It suffices to find $u\in\ker M$ with $u|_{\bar B}=t_1|_{\bar B}$, as then $t=t_1+u$ satisfies $Mt=x^*$ and $t|_{\bar B}=0$. Suppose, for a contradiction, that there is no such $u$, i.e., $t_1|_{\bar B}\notin U:=\{u|_{\bar B}: u\in \ker M\}$. As $U$ is a subspace of $\mathbb{F}_2^{\bar B}$ and $t_1|_{\bar B}\notin U$, we have $\dim U<|\bar B|$, and thus by rank-nullity, $\dim U^{\perp}\ge1$ (where $U^{\perp}$ is taken inside $\mathbb{F}_2^{\bar B}$). Therefore there is a non-zero vector $v\in \mathbb{F}_2^{\bar B}$ such that $\langle v,u|_{\bar B}\rangle=0$ for all $u\in\ker M$. Let $\hat v\in\mathbb{F}_2^{Y}$ be the extension of $v$ by
			zeros. Then, as $\langle\hat v,u\rangle=\langle v,u|_{\bar B}\rangle=0$ for all $u\in\ker M$, $\hat v\in(\ker M)^{\perp}=\mathcal{Y}$ by Lemma~\ref{lem:duality}. Thus $\hat v$ is a nonzero vector of $\mathcal{Y}$ whose support is contained in $\bar B$ and hence disjoint from $\supp(y^{*})$. Since $\mathcal{Y}$ is a linear subspace, $y^{*}+\hat v\in\mathcal{Y}$, and as the two supports are disjoint,
			$\supp(y^{*}+\hat v)=\supp(y^{*})\cup\supp(\hat v)$. Hence
			\[
			\supp(y^*)\subsetneq\supp(y^{*}+\hat v),
			\]
			contradicting the maximality of $y^{*}$. This proves (i).
			
			(ii): By (i), let $t'\in \mathbb{F}_2^Y$ with $Mt'=x^*$ such that $\supp(t')\subseteq B$. Thus, $M_B(t'|_B)=x^*$. If $|B|-k_B=0$ there is nothing to prove, so assume $|B|>k_B$. As $\dim \ker M_B = |B| - k_B$, we may assume $\phi_1, \phi_2, \dots, \phi_{|B|-k_B}$ is a basis for $\ker  M_B$. Thus, as row rank equals column rank, there is a subset $I\subseteq B$ with $|I|=|B|-k_B$, such that $\phi_1|_I, \phi_2|_I, \dots, \phi_{|B|-k_B}|_I$ is a basis for $\mathbb{F}_2^{I}$, and therefore in particular there exists a $u\in\spn\{\phi_1, \phi_2, \dots, \phi_{|B|-k_B}\}$ such that $u|_I+t'|_I=\mathbf{1}_I$. Let $\hat u\in\mathbb{F}_2^{Y}$ be the extension of $u$ by zeros and $t:=t'+\hat u$. Then, $Mt=Mt'+M\hat u=Mt'+M_Bu=x^{*}+0=x^{*}$ and $\supp(t)\subseteq B$. Moreover $t|_I=t'|_I+u|_I=\mathbf{1}_I$, so $I\subseteq\supp(t)$ and thus $|\supp(t)|\ge|I|=|B|-k_B$, as required.
		\end{proof}
		
		By Claim \ref{cl:key}, there exists $t$ with
		\[
		Mt=x^{*},\qquad \supp(t)\subseteq B,\qquad |\supp(t)|\ge |B|-k_B=|\supp(y^*)|-k_B,
		\]
		and $s$ with
		\[
		M^{\top}s=y^{*},\qquad \supp(s)\subseteq A,\qquad
		|\supp(s)|\ge |A|-k_A=|\supp(x^*)|-k_A.
		\]
		
		Let $S=\supp(s)$ and $T=\supp(t)$. Both are nonempty since
		$M^{\top}s=y^{*}\ne 0$ and $Mt=x^{*}\ne 0$. Then
		\[
		\supp(s)\subseteq A=\supp(x^{*})=\supp(Mt),
		\qquad
		\supp(t)\subseteq B=\supp(y^{*})=\supp(M^{\top}s),
		\]
		so $G[S\cup T]$ is odd by Observation~\ref{obs:oddpairs}, and
		\[\fo(G)\geq |S|+|T|\ge(|\supp(x^*)|-k_A)+(|\supp(y^*)|-k_B),\]
		completing the proof.
	\end{proof}
	
	Clearly $k_A,k_B\leq k$ and on the other hand if $\delta(G)\geq 1$, by taking $x$ and $y$ uniformly at random from $\mathcal{X}$ and $\mathcal{Y}$, we can see that $\mathbb{E} |\supp(x)|=|X|/2$ and $\mathbb{E} |\supp(y)|=|Y|/2$ \VV{(indeed, as observed in the proof of Lemma~\ref{lem:ranksensitive} below, $\delta(G)\ge 1$ implies that no coordinate vanishes identically on $\mathcal{X}$ or on $\mathcal{Y}$, and a coordinate that does not vanish identically equals $1$ for exactly half of the vectors of the subspace)} and therefore $|\supp(x^*)|\geq |X|/2$ and $|\supp(y^*)|\geq |Y|/2$. Thus, we have the following corollary.
	
	\begin{corollary}\label{cor:n2minus2k}
		If $\delta(G)\geq 1$, then $\fo(G)\geq n/2-2k$.
	\end{corollary}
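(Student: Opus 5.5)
The plan is to apply Lemma~\ref{lem:boundintwoways} directly and then estimate the two support sizes and the two ranks separately.

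For the ranks, $M_A$ is a row-submatrix of $M$ and $M_B$ is a column-submatrix, so $k_A\le k$ and $k_B\le k$. Lemma~\ref{lem:boundintwoways} then gives
\[
\fo(G)\ge |\supp(x^*)|+|\supp(y^*)|-2k .
\]

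It remains to show $|\supp(x^*)|\ge |X|/2$ and $|\supp(y^*)|\ge |Y|/2$; summing these gives $n/2-2k$. I would use an averaging argument over the subspaces $\mathcal{X}$ and $\mathcal{Y}$.

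\emph{No coordinate vanishes.} Fix $x\in X$. Since $G$ is bipartite and $\delta(G)\ge1$, $x$ has a neighbour $y\in Y$. Then $M\mathbf{1}_{\{y\}}$ is the column of $M$ indexed by $y$, and its $x$-coordinate is $1$. So the coordinate functional $\pi_x:\mathcal{X}\to\F_2$ is nonzero. Symmetrically, for $y\in Y$ a neighbour $x$ of $y$ gives $M^\top\mathbf{1}_{\{x\}}\in\mathcal{Y}$ with $y$-coordinate $1$.

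\emph{Averaging.} A nonzero linear functional on an $\F_2$-space is surjective, and its kernel has index $2$. So exactly half of the vectors of $\mathcal{X}$ have $x$-coordinate $1$. Taking $x$ uniformly at random from $\mathcal{X}$ and summing over coordinates by linearity of expectation gives $\E|\supp(x)|=|X|/2$. Hence some vector of $\mathcal{X}$ has support at least $|X|/2$, and since $x^*$ has maximum support, $|\supp(x^*)|\ge|X|/2$. The same argument gives $|\supp(y^*)|\ge|Y|/2$.

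There is no real obstacle here. The only point needing care is that $\delta(G)\ge1$ is exactly what makes every coordinate nonvanishing on $\mathcal{X}$ and $\mathcal{Y}$. This is used for vertices of both parts, which is why the hypothesis is needed on all of $V(G)$.
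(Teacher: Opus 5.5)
Your proposal is correct and follows essentially the same route as the paper. You apply Lemma~\ref{lem:boundintwoways}, bound $k_A,k_B\le k$, and use $\delta(G)\ge 1$ to show no coordinate vanishes on $\mathcal{X}$ or $\mathcal{Y}$, so that a uniformly random vector has expected support $|X|/2$ (resp.\ $|Y|/2$), giving $|\supp(x^*)|\ge|X|/2$ and $|\supp(y^*)|\ge|Y|/2$.
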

	
	\subsection{\texorpdfstring{\VV{Rank-sensitive bounds, including $f_o(G)\geq k$}}{Rank-sensitive bounds, including fo(G) >= k}}
	
	In this section we show that each of $|\supp(x^{*})|$ and $|\supp(y^{*})|$ is on its own a lower bound for $\fo(G)$, and that the estimates $|\supp(x^{*})|\ge|X|/2$ and $|\supp(y^{*})|\ge|Y|/2$ used above can be improved by an amount depending on $k$. Combining the resulting bound with Corollary~\ref{cor:n2minus2k} gives a uniform improvement on the bound $\fo(G)\ge n/4$. 
	\begin{lemma}\label{lem:onesided}
		Let $x^{*}\in\mathcal{X}$ and $y^{*}\in\mathcal{Y}$ be of maximum support size among all vectors in $\mathcal{X}$ and $\mathcal{Y}$. For every $t\in\mathbb{F}_2^{Y}$ we have $\fo(G)\ge|\supp(Mt)|$, and for every $s\in\mathbb{F}_2^{X}$ we have $\fo(G)\ge|\supp(M^{\top}s)|$. In particular,
		\[
		\fo(G)\ \ge\ \max\bigl\{|\supp(x^{*})|,\ |\supp(y^{*})|\bigr\}.
		\]
	\end{lemma}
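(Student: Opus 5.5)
The plan is to apply Lemma~\ref{lem:completion} directly to the bipartite graph $G$, with all labels equal to $1$. Fix $t\in\mathbb{F}_2^{Y}$ and set $Q:=\supp(Mt)\subseteq X$ and $P:=\supp(t)\subseteq Y$. By the definition of $Q$, every $x\in Q$ has $d_P(x)\equiv (Mt)_x=1\pmod 2$.

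We now take the bipartite graph $B=(Q,P;F)$ consisting of the edges of $G$ between $Q$ and $P$. We put $X:=Q$ and $Y:=P$ in Lemma~\ref{lem:completion}, and use the constant labelling $\ell\equiv 1$. The hypothesis $d_P(x)\equiv 1$ for all $x\in Q$ is exactly what the lemma requires. It therefore yields $X'\subseteq Q$ and $Y'\subseteq P$ with $|X'|+|Y'|\ge |Q|$. Every $x\in X'$ has an odd number of neighbours in $Y'$, and every $y\in Y'$ has an odd number of neighbours in $X'$.

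Since $G$ is bipartite with parts $X$ and $Y$, the induced subgraph $G[X'\cup Y']$ has no edges inside $X'$ or inside $Y'$. Hence its degrees are exactly these cross degrees, all of which are odd. So $X'\cup Y'$ is an odd set, and $\fo(G)\ge |X'|+|Y'|\ge|\supp(Mt)|$. Equivalently, by Observation~\ref{obs:oddpairs}, the pair $(S,T)=(X',Y')$ gives an odd induced subgraph. The degenerate case $Mt=0$ is trivial.

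The statement for $s$ follows by the symmetric argument, with the roles of $X$ and $Y$ and of $M$ and $M^{\top}$ exchanged. The final inequality comes from choosing $t$ with $Mt=x^{*}$ and $s$ with $M^{\top}s=y^{*}$, which is possible because $x^{*}\in\mathcal{X}$ and $y^{*}\in\mathcal{Y}$.

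There is no real obstacle here: Lemma~\ref{lem:completion} does all the work. The only point that needs care is checking that the parity conditions inside the bipartite subgraph $B$ coincide with the degree conditions in $G[X'\cup Y']$. This holds because $G$ has no edges within $X$ or within $Y$. In particular, it would fail for non-bipartite graphs, where one would instead need Corollary~\ref{cor:one-sided} and lose a factor of $2$.
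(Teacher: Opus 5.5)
Your proposal is correct and matches the paper's proof essentially step for step. Like the paper, you apply Lemma~\ref{lem:completion} with $\ell\equiv 1$ to the bipartite graph with parts $\supp(Mt)$ and $\supp(t)$. You then use the bipartiteness of $G$ to identify the resulting cross-parity subgraph with the induced subgraph $G[X'\cup Y']$.
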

	
	\begin{proof}
		We only prove the first assertion, the second being the first applied to $M^{\top}$.
		
		Let $t\in\mathbb{F}_2^{Y}$, $T:=\supp(t)$ and $R:=\supp(Mt)$. We may assume $R\neq\emptyset$.
		By the definition of $R$, every $r\in R$ satisfies $d_T(r,G)\equiv 1\pmod 2$.  As $G$ is
		bipartite with $R\subseteq X$ and $T\subseteq Y$, the induced subgraph $B:=G[R\cup T]$ has no
		edges inside $R$ and none inside $T$, so it is the bipartite graph with parts $R$ and $T$.
		Applying Lemma~\ref{lem:completion} to $B$, whose two parts $R$ and $T$ play the roles of $X$
		and $Y$, with $\ell\equiv 1$, we obtain sets $X'\subseteq R$ and $Y'\subseteq T$ with
		$|X'|+|Y'|\ge |R|$ such that every vertex of $B[X'\cup Y']$ has odd degree.  Since
		$B[X'\cup Y']=G[X'\cup Y']$, the graph $G[X'\cup Y']$ is odd, and therefore
		\[
		\fo(G)\ \ge\ |X'|+|Y'|\ \ge\ |R|\ =\ |\supp(Mt)| ,
		\]
		which completes the proof.
	\end{proof}
	
	Together with the fact that a $k$-dimensional subspace of $\mathbb{F}_2^{X}$ contains a vector of support size at least $k$, this yields a lower bound on $f_o(G)$ in terms of the rank of $M$, even without assuming that $\delta(G)\geq 1$, which is tight for $P_4$ (path on 4 vertices). 
	
	\begin{corollary}\label{cor:rank}
		$f_o(G)\geq k$. 
	\end{corollary}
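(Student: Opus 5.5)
We combine Lemma~\ref{lem:onesided} with the fact quoted just before the statement: every $k$-dimensional subspace of $\mathbb{F}_2^{X}$ contains a vector whose support has size at least $k$. Then $f_o(G)\ge|\supp(x^*)|\ge k$ immediately. If $k=0$ the claim is trivial, so assume $k\ge1$. No hypothesis $\delta(G)\ge1$ is needed, since Lemma~\ref{lem:onesided} applies to any $t\in\mathbb{F}_2^{Y}$, and Lemma~\ref{lem:completion} has no degree assumption.

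The only step to justify is the subspace fact, which I would prove by Gaussian elimination. Take a basis of $\mathcal{X}$ and bring the $k\times|X|$ matrix of basis vectors into reduced row echelon form. This gives basis vectors $v_1,\dots,v_k$ and distinct pivot coordinates $p_1,\dots,p_k\in X$ with $(v_i)_{p_j}=1$ if $i=j$ and $0$ otherwise. The vector $v_1+\dots+v_k\in\mathcal{X}$ then has a $1$ in each coordinate $p_1,\dots,p_k$, so its support has size at least $k$. Equivalently, as the paper does elsewhere, one can choose $k$ linearly independent columns $I$ of the basis matrix, note that restriction $\mathcal{X}\to\mathbb{F}_2^{I}$ is a bijection, and pull back $\mathbf 1_I$.

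Hence $|\supp(x^*)|\ge k$ by maximality of $x^*$, and Lemma~\ref{lem:onesided} gives $f_o(G)\ge k$. Concretely, writing the chosen vector as $Mt$, the odd induced subgraph is the one produced by Lemma~\ref{lem:completion} on $G[\supp(Mt)\cup\supp(t)]$. There is no real obstacle here.

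For the tightness remark with $P_4$, one checks that $k=2$ and that $P_4$ has no odd induced subgraph on $4$ vertices, since $P_4$ itself has degree-$2$ vertices. Hence $f_o(P_4)=2$.
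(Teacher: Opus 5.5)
Your proposal is correct and follows the paper's proof essentially verbatim. The paper also exhibits a vector of $\mathcal{X}$ with support of size at least $k$ by choosing $k$ linearly independent columns $I$ of a basis matrix and pulling back $\mathbf{1}_I$, which is the alternative you mention and is equivalent to your reduced-row-echelon argument. It then applies Lemma~\ref{lem:onesided}, as you do, and your $P_4$ tightness check is also right.
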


	\begin{proof}
		Let $\Gamma_0$ be a $k\times|X|$ matrix whose rows form a basis of $\mathcal{X}$. As row rank equals column rank, $\Gamma_0$ has $k$ linearly independent columns. Let $I\subseteq X$ be the set of the corresponding coordinates. Thus, $(\Gamma_0)_I$ is invertible and therefore some $x\in\mathcal{X}$ satisfies $x|_I=\mathbf{1}_I$, so that $|\supp(x)|\ge|I|=k$. Thus, Lemma~\ref{lem:onesided} gives $\fo(G)\ge|\supp(x)|\ge k$.
	\end{proof}

	\begin{lemma}\label{lem:ranksensitive}
		Let $\delta(G)\ge1$ and $m:=\max\{|X|,|Y|\}$, so that $m\ge n/2$. Then, if $k\leq \frac{2m}{3}$, we have
		\[
		\fo(G)\ \ge
		\dfrac{m}{2}+\dfrac{k^{2}}{8(m-k)}. 
		\]
	\end{lemma}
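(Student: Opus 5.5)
By Lemma~\ref{lem:onesided} it suffices to find a vector in $\mathcal{X}$ or in $\mathcal{Y}$ with large support. Without loss of generality $|X|=m$. We then work only with the $k$-dimensional subspace $\mathcal{X}\subseteq\F_2^{X}$. Since $\delta(G)\ge1$, no coordinate of $\mathcal{X}$ vanishes identically: every $x\in X$ has a neighbour $y$, and then $(M\1_{\{y\}})_x=1$. So the claim reduces to a statement about binary linear codes.

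\emph{Claim.} A $k$-dimensional subspace $\mathcal{C}\subseteq\F_2^m$ with no identically zero coordinate and $k\le 2m/3$ contains a vector of weight at least $m/2+k^2/(8(m-k))$.

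As in Corollary~\ref{cor:rank}, fix a generator matrix $\Gamma_0$ and an information set $I$, $|I|=k$, on which $\mathcal{C}$ projects bijectively onto $\F_2^I$. Let $J:=X\setminus I$, so $|J|=m-k$. Each coordinate $j\in J$ is a nonzero linear functional $\lambda_j$ of $x|_I$. Write $w(x)=|x|_I|+|x|_J|$.

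The plan is a weighted averaging argument. Instead of averaging over all of $\mathcal{C}$, which only gives $m/2$, we average over vectors $x$ whose restriction $x|_I$ has a prescribed weight $j$, chosen uniformly among the $\binom{k}{j}$ such restrictions. For a fixed nonzero functional $\lambda$ depending on $r\ge1$ coordinates of $I$, the probability that $\lambda(x|_I)=1$ is a Krawtchouk-type quantity $p_r(j)$. This probability equals $1/2$ on average over $j$, and it deviates from $1/2$ by at most about $|1-2j/k|^r/2$. Taking $j=k$ (the vector $\1_I$) forces the $I$-part to contribute $k$. The $J$-part then consists of coordinates that are $1$ only for odd $r$. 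A mixture over $j$ interpolates between the trivial bound $k$ and the average $m/2$.

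Concretely, I would pick $x|_I$ at random with each coordinate equal to $1$ independently with probability $p=\tfrac12+\varepsilon$. Then $\Pr(\lambda_j(x)=1)=\tfrac12\bigl(1-(-2\varepsilon)^{r_j}\bigr)\ge\tfrac12-\tfrac12(2\varepsilon)^{r_j}$, which is at least $\tfrac12-\varepsilon$ for $r_j\ge1$ when $\varepsilon\le1/2$. This gives
\[
\E\, w\ \ge\ k\Bigl(\tfrac12+\varepsilon\Bigr)+(m-k)\Bigl(\tfrac12-\varepsilon\Bigr)-(\text{correction}),
\]
and the linear term $\varepsilon(2k-m)$ alone is not enough when $k<m/2$. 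The refinement is to use $\Pr(\lambda=1)=\tfrac12\bigl(1-(-2\varepsilon)^{r}\bigr)$ exactly. Here the worst case is $r=1$, i.e.\ coordinates of $J$ that duplicate a coordinate of $I$. Those duplicate a coordinate of $I$ and so also gain $+\varepsilon$. The genuinely harmful case is $r=2$, with loss only $2\varepsilon^2$ per coordinate. Balancing a gain of order $k\varepsilon$ against a loss of order $2(m-k)\varepsilon^2$ at $\varepsilon\approx k/(4(m-k))$ yields an excess of $k^2/(8(m-k))$ over $m/2$. The hypothesis $k\le 2m/3$ is exactly what keeps this $\varepsilon$ at most $1/2$, so that $p$ is a valid probability.

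The main obstacle is organising this bookkeeping correctly. The gain on $I$ must be compared with an average of $\tfrac12\bigl(1-(-2\varepsilon)^{r_j}\bigr)$ over $J$, and coordinates with odd $r_j\ge3$ have a negative correction of order $\varepsilon^3$ that has to be absorbed. I would first try choosing the basis/information set $I$ so that every $\lambda_j$ with $r_j$ odd is handled by the symmetry $\varepsilon\mapsto-\varepsilon$. Alternatively, I would average the two choices $\pm\varepsilon$, which replaces each $J$-contribution by $\tfrac12\bigl(1-(2\varepsilon)^{r_j}\bigr)$ for even $r_j$ only, and then optimise $\varepsilon$ as above. Once $\E\,w\ge m/2+k^2/(8(m-k))$ is established, some vector of $\mathcal{X}$ attains it, and Lemma~\ref{lem:onesided} finishes the proof.
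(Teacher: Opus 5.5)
Your setup is the paper's own: a systematic generator matrix whose redundant columns are nonzero, a biased product measure on the information coordinates with $p=\tfrac12+\varepsilon$ (the paper's $q$ is your $2\varepsilon$), and the optimisation at $\varepsilon=k/(4(m-k))$, with $k\le 2m/3$ guaranteeing $\varepsilon\le\tfrac12$. However, the argument needs $\Pr(\lambda_j(x)=1)\ge\tfrac12-2\varepsilon^2$ for every $j\in J$. You leave this unproved and instead describe an obstacle that does not exist, because of a sign error: you assert that coordinates with odd $r_j\ge3$ carry a negative correction of order $\varepsilon^3$. For $\varepsilon\ge0$ and odd $r$ one has $(-2\varepsilon)^r=-(2\varepsilon)^r\le0$, so $\Pr(\lambda_j(x)=1)=\tfrac12\bigl(1+(2\varepsilon)^{r_j}\bigr)\ge\tfrac12$. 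Every odd $r_j$ is therefore a gain, just like $r_j=1$. For even $r_j\ge2$, the bound $0\le2\varepsilon\le1$ gives $(2\varepsilon)^{r_j}\le(2\varepsilon)^2$, so the loss is at most $2\varepsilon^2$. Hence $\E\,w\ge k(\tfrac12+\varepsilon)+(m-k)(\tfrac12-2\varepsilon^2)$ holds directly, and your optimisation finishes the proof. This is exactly the paper's step $\Pr(\xi\cdot d=1)\ge(1-q^2)/2$.

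You should discard the remedies you propose for this non-issue. Averaging the choices $+\varepsilon$ and $-\varepsilon$ would break the argument. It makes the $I$-part contribute exactly $k/2$, so the $+k\varepsilon$ gain cancels, and the expected weight is at most $m/2$. Re-choosing the information set to exploit the symmetry $\varepsilon\mapsto-\varepsilon$ is also unnecessary. Once the sign is handled correctly, any information set works.
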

	
	\begin{proof}
		As $\delta(G)\geq 1$, we have $k\ge1$ and $m\ge1$. Assume without loss of generality that $|X|\ge|Y|$, and therefore $m=|X|$. By Lemma~\ref{lem:onesided} we have $\fo(G)\ge|\supp(x^{*})|=\max_{x\in\mathcal{X}}|\supp(x)|$, so it suffices to bound $\max_{x\in\mathcal{X}}|\supp(x)|$ from below.
		
		No coordinate of $\mathcal{X}$ vanishes identically: given $x_0\in X$, choose $y_0\in N(x_0)$ (this is possible as $\delta(G)\ge1$), then $M\mathbf{1}_{\{y_0\}}\in\mathcal{X}$ has a $1$ in the coordinate $x_0$.
		
		Let $\Gamma_0$ be a $k\times m$ matrix whose rows form a basis of $\mathcal{X}$. As row rank equals column rank, $\Gamma_0$ has $k$ linearly independent columns. After ordering the elements of $X$ so that these come first, and replacing $\Gamma_0$ by $C^{-1}\Gamma_0$, where $C$ is the invertible $k\times k$ matrix formed by those columns, we may assume that $\mathcal{X}$ has a generator matrix
		\[
		\Gamma=[\,I_k\mid D\,],\qquad D\in\mathbb{F}_2^{k\times(m-k)} .
		\]
		Every column of $D$ is nonzero, since otherwise the corresponding coordinate would vanish on all of $\mathcal{X}$, contrary to the previous paragraph.
		
		Let $q\in[0,1]$ and $p=(1+q)/2$. Let $\xi=(\xi_1,\dots,\xi_k)\in\mathbb{F}_2^{k}$ have independent coordinates with $\Pr(\xi_i=1)=p$, and consider the random vector
		\[
		W:=\xi\Gamma\in\mathcal{X} .
		\]
		The first $k$ coordinates of $W$ are $\xi_1,\dots,\xi_k$, and so contribute $pk$ to $\mathbb{E}|\supp(W)|$. Let $d$ be a column of $D$ and $\ell:=|\supp(d)|\ge1$. Since the $\xi_i$ are independent with $\mathbb{E}\bigl[(-1)^{\xi_i}\bigr]=1-2p=-q$, we have $\mathbb{E}\bigl[(-1)^{\xi\cdot d}\bigr]=\prod_{i\in \supp(d)}\mathbb{E}\bigl[(-1)^{\xi_i}\bigr]=(-q)^{\ell}$ and hence
		\[
		\Pr(\xi\cdot d=1)=\frac{1-(-q)^{\ell}}{2}\ \ge\ \frac{1-q^{2}}{2},
		\]
		because $(-q)^{\ell}=-q^{\ell}\le0$ if $\ell$ is odd, while $(-q)^{\ell}=q^{\ell}\le q^{2}$ if $\ell$ is even, in which case $\ell\ge2$. Summing over the coordinates,
		\begin{equation}\label{eq:biased}
			\mathbb{E}|\supp(W)|\ \ge\ pk+(m-k)\,\frac{1-q^{2}}{2}
			\ =\ \frac{m}{2}+\frac{qk}{2}-\frac{q^{2}(m-k)}{2}.
		\end{equation}
		
		Let $q:=\dfrac{k}{2(m-k)}$ (note that $q\le1$ because $3k\le2m$). With this choice \eqref{eq:biased} gives
		\[
		\mathbb{E}|\supp(W)|\ \ge\ \frac{m}{2}+\frac{k^{2}}{4(m-k)}-\frac{k^{2}}{8(m-k)}=\frac{m}{2}+\frac{k^{2}}{8(m-k)} .
		\]
		Thus, some vector of $\mathcal{X}$ (with coordinate vector $\xi C^{-1}$ with respect to the basis given by the rows of $\Gamma_0$) has support of at least the stated size, which proves the displayed inequality.
	\end{proof}
	
	\subsection{Proof of the main result}
	
	\begin{theorem}\label{thm:quarterplus}
		Let $G$ be a bipartite graph with $\delta(G)\ge1$. Then
		\[
		\fo(G)\ \ge\ \Bigl(\frac14+\frac1{256}\Bigr)n\ =\ \frac{65n}{256}.
		\]
	\end{theorem}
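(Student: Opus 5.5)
We combine Corollary~\ref{cor:n2minus2k} with Lemma~\ref{lem:ranksensitive}, splitting into two cases according to the size of the rank $k$ relative to $n$. Write $c:=1/256$ and let $m=\max\{|X|,|Y|\}\ge n/2$. Corollary~\ref{cor:n2minus2k} gives $\fo(G)\ge n/2-2k$. This is at least $(1/4+c)n$ whenever $k\le (1/8-c/2)n$. So the remaining case is $k>(1/8-c/2)n$, i.e.\ $k$ is at least roughly $n/8$.

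In the large-rank case we use Lemma~\ref{lem:ranksensitive} when its hypothesis $k\le 2m/3$ holds, and Corollary~\ref{cor:rank} ($\fo(G)\ge k$) otherwise. If $k>2m/3$, then $\fo(G)\ge k>2m/3\ge n/3>(1/4+c)n$, and we are done.

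If $k\le 2m/3$, Lemma~\ref{lem:ranksensitive} gives
\[
\fo(G)\ge \frac m2+\frac{k^2}{8(m-k)}.
\]
The right-hand side is increasing in $k$ for $k<m$, so it suffices to substitute the lower bound $k\ge k_0:=(1/8-c/2)n$. Note that $k_0\le 2m/3$ automatically, since $m\ge n/2$. We then need to minimise the resulting expression over $m\in[n/2,n]$. The function $g(m)=m/2+k_0^2/(8(m-k_0))$ has derivative $1/2-k_0^2/(8(m-k_0)^2)$, which is positive as soon as $m-k_0>k_0/2$. This holds for $m\ge n/2$, because $k_0<n/8$. Hence the minimum is attained at $m=n/2$, and
\[
\fo(G)\ge \frac n4+\frac{k_0^2}{8(n/2-k_0)}.
\]

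The main, though routine, step is checking the arithmetic, namely that
\[
\frac{k_0^2}{8(n/2-k_0)}\ge cn \quad\text{with } c=1/256.
\]
With $k_0\approx n/8$, the left side is about
\[
\frac{n^2/64}{8\cdot 3n/8}=\frac{n}{192},
\]
which comfortably exceeds $n/256$, so there is slack. Precisely, with $k_0=(1/8-1/512)n=(63/512)n$, the required inequality becomes $(63/512)^2\ge 8c\,(1/2-63/512)$. This is $3969/262144$ on the left against $(1/32)(193/512)=193/16384=3088/262144$ on the right, so it holds.

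The only subtle points are: monotonicity in $k$, so that we may substitute the lower bound $k_0$; monotonicity in $m$, which needs $m-k_0\ge k_0/2$; and the boundary case $k>2m/3$, which is handled by Corollary~\ref{cor:rank}. Combining the cases gives $\fo(G)\ge 65n/256$.
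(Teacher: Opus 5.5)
Your proposal is correct and follows essentially the same route as the paper: the same three-way case split at $k_0=63n/512$, using Corollary~\ref{cor:n2minus2k} for small rank, Corollary~\ref{cor:rank} when $k>2m/3$, and Lemma~\ref{lem:ranksensitive} with monotonicity in $k$ and $m$ otherwise. Your final arithmetic check, $3969>3088$, is exactly the paper's verification.
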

	
	\begin{proof}
		Let $m:=\max\{|X|,|Y|\}$, so that $m\ge n/2$, and let
		\[
		k_0:=\frac{63n}{512} .
		\]
		We distinguish three cases.
		
		\medskip\noindent\emph{Case 1: $k\le k_0$.} By Corollary~\ref{cor:n2minus2k},
		\[
		\fo(G)\ \ge\ \frac n2-2k\ \ge\ \frac{256n}{512}-\frac{126n}{512}\ =\ \frac{130n}{512}\ =\ \frac{65n}{256} .
		\]
		
		\medskip\noindent\emph{Case 2: $k>k_0$ and $k>2m/3$.} By Corollary~\ref{cor:rank} and the fact that $m\ge n/2$,
		\[
		\fo(G)\ \ge\ k\ >\ \frac{2m}{3}\ \ge\ \frac n3\ >\ \frac{65n}{256}.
		\]
		
		\medskip\noindent\emph{Case 3: $k>k_0$ and $k\le 2m/3$.} By Lemma~\ref{lem:ranksensitive},
		\[
		\fo(G)\ \ge\ F(m,k),\qquad\text{where } F(x,y):=\frac x2+\frac{y^{2}}{8(x-y)} \quad (0\le y<x).
		\]
		Since
		\[
		\frac{\partial F}{\partial y}=\frac{y(2x-y)}{8(x-y)^{2}}\ \ge\ 0,
		\]
		for fixed $x$, the function $F(x,\cdot)$ is increasing on $[0,x)$, and therefore $F(m,k)>F(m,k_0)$. For fixed $y$, the function $F(\cdot,y)$ satisfies
		\[
		\frac{\partial F}{\partial x}=\frac12-\frac{y^{2}}{8(x-y)^{2}} ,
		\]
		which is positive precisely when $2(x-y)>y$. Taking $y=k_0$ and $x=m\ge n/2$ we get
		\[
		m-k_0\ \ge\ \frac{256n}{512}-\frac{63n}{512}\ =\ \frac{193n}{512}\ >\ \frac{k_0}{2} ,
		\]
		so $F(\cdot,k_0)$ is increasing on $[n/2,\infty)$ and hence $F(m,k_0)\ge F(n/2,k_0)$. Finally,
		\[
		F\Bigl(\frac n2,\frac{63n}{512}\Bigr)
		=\frac n4+\frac{(63n/512)^{2}}{8\cdot(193n/512)}
		=\frac n4+\frac{3969\,n}{790528} ,
		\]
		and $\dfrac{3969}{790528}>\dfrac1{256}$, because $790528/256=3088<3969$. Combining the three displays,
		\[
		\fo(G)\ >\ \frac n4+\frac{n}{256}\ =\ \frac{65n}{256} .
		\]
		
		\medskip
		In all three cases $\fo(G)\ge 65n/256$, as required.
	\end{proof}

	\begin{remark}
	No attempt was made to optimize the constant in Theorem~\ref{thm:quarterplus}: balancing Cases 1 and 3 at $k_0=(5-\sqrt{10})n/15$ yields the marginally better bound $\fo(G)\ge (4\sqrt{10}-5)n/30\approx 0.25497\,n$.
	\end{remark}

	\begin{remark} Since $65n/256=n/(2\cdot 128/65)$, we have $t_2\le 128/65<2$.
	\end{remark}
	
	\section{Improving Scott's Bound $\fo(G)\ge \alpha(G)/2$}\label{sec:alpha}
	
	\subsection{Preliminaries}
	
	Let $G$ be a (simple) graph with $\delta(G)\ge 1$, and let $X$ be a maximum independent set of $G$, i.e., $|X|=\alpha(G).$
	Since $G$ has no isolated vertices, $V(G)\setminus X$ dominates $X$.  Let $D\subseteq V(G)\setminus X$ be a minimal subset that dominates $X$.  For every $v\in D$, choose a private neighbour $p_v\in X$, that is, $N(p_v)\cap D=\{v\}.$ Let $P=\{p_v:v\in D\}$ and $X_0=X\setminus P.$
	For $U\subseteq D$, define $$R(U)=\{x\in X_0:\ |N(x)\cap U|\text{ is odd}\}.$$
	
	The following is basically what was shown by Scott \cite{Scott1992}. 
	
	\begin{lemma}\label{lem:simple-lemma2}
		Let $G$, $X$, $D$, $P$, $X_0$, and $R(U)$ be as above.  For $U\subseteq D$, define
		$$
		Q(U)=\{p_v:v\in U,\ \deg_{G[U]}(v)+|N(v)\cap R(U)|\text{ is even}\}
		$$
		and
		$
		S(U)=U\cup R(U)\cup Q(U).
		$
		Then $G[S(U)]$ is odd.  In particular,
		$\fo(G)\ge |S(U)|\ge |U|+|R(U)|.$
	\end{lemma}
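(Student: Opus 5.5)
The plan is to check directly that every vertex of $S(U)$ has odd degree in $G[S(U)]$, treating the three (pairwise disjoint) pieces $U\subseteq D\subseteq V(G)\setminus X$, $R(U)\subseteq X_0$ and $Q(U)\subseteq P$ separately. Disjointness holds because $X_0=X\setminus P$ and $D\cap X=\emptyset$. Since $X$ is independent, $R(U)\cup Q(U)\subseteq X$ spans no edges. Hence every vertex of $R(U)\cup Q(U)$ has all its $S(U)$-neighbours in $U$, and only vertices of $U$ can have neighbours of several types.

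The checks on the $X$-side come first. For $x\in R(U)$, its degree in $G[S(U)]$ equals $|N(x)\cap U|$, which is odd by the definition of $R(U)$. For $p_v\in Q(U)$, the private-neighbour property $N(p_v)\cap D=\{v\}$ together with $v\in U\subseteq D$ gives $N(p_v)\cap U=\{v\}$. So $p_v$ has degree exactly $1$ in $G[S(U)]$.

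For $v\in U$, I would write
\[
\deg_{G[S(U)]}(v)=\deg_{G[U]}(v)+|N(v)\cap R(U)|+|N(v)\cap Q(U)|.
\]
The key observation is that $N(v)\cap Q(U)\subseteq\{p_v\}$. Indeed, if $p_u\in Q(U)$ is adjacent to $v$, then $v\in N(p_u)\cap D=\{u\}$, so $u=v$. Thus the last term equals $1$ exactly when $p_v\in Q(U)$, which by definition happens exactly when $\deg_{G[U]}(v)+|N(v)\cap R(U)|$ is even. In either case the total is odd. This step, showing that the private neighbours of other vertices cannot interfere, is the only point needing care, and it follows immediately from privateness.

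Therefore $G[S(U)]$ is odd, and $\fo(G)\ge|S(U)|=|U|+|R(U)|+|Q(U)|\ge|U|+|R(U)|$, where the equality uses the disjointness established above.
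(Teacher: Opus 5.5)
Your proof is correct and complete. The paper gives no proof of this lemma (it only says it is essentially due to Scott), and your direct degree check is the natural verification: pairwise disjointness of $U$, $R(U)$, $Q(U)$, independence of $X$, and privateness forcing $N(v)\cap Q(U)\subseteq\{p_v\}$, with $p_v\in N(v)$ because $N(p_v)\cap D=\{v\}$. The paper relies on these same facts in the proof of Lemma~\ref{lem:bound2}.
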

	
	The following result is a corollary of the hypercontractivity theorem (see Corollary 5.1 in \cite{ODonnell2008}). 
	
	\begin{lemma}[\cite{ODonnell2008}]\label{lem:hct}
		Let $y_1,y_2,\dots,y_n$ be i.i.d. random variables taking values uniformly from $\{+1,-1\}$ and $Y(y_1,y_2,\dots,y_n)$ a multilinear polynomial \VV{in them of} degree at most $r$. Then $\mathbb{E}[Y^4]\leq 9^{r} (\mathbb{E}[Y^2])^2$.
	\end{lemma}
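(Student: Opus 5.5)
The plan is to prove the inequality directly, by induction on the number $n$ of variables, using the standard Bonami argument. Hypercontractivity in general is not needed. Throughout, "multilinear of degree at most $r$" means $Y=\sum_{S\subseteq[n],\,|S|\le r}c_S\prod_{i\in S}y_i$ with real coefficients $c_S$. The induction is on $n$, for all $r\ge 0$ simultaneously.

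For $n=0$ the polynomial $Y$ is a constant $c$. Then $\mathbb{E}[Y^4]=c^4=(\mathbb{E}[Y^2])^2\le 9^r(\mathbb{E}[Y^2])^2$, so the base case holds.

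For the inductive step, separate the monomials containing $y_n$ from those that do not, and write
\[
Y=y_nA+B .
\]
Here $A$ and $B$ are multilinear polynomials in $y_1,\dots,y_{n-1}$ only, with $\deg A\le r-1$ and $\deg B\le r$. If $r=0$ then $A=0$ and the claim reduces to the induction hypothesis for $B$.

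Expand $Y^4$ and take the expectation over $y_n$ first. Since $y_n$ is independent of $(A,B)$ and satisfies $y_n^2=1$, $\mathbb{E}[y_n]=\mathbb{E}[y_n^3]=0$, the odd powers of $y_n$ vanish. This gives
\[
\mathbb{E}[Y^4]=\mathbb{E}[A^4]+6\,\mathbb{E}[A^2B^2]+\mathbb{E}[B^4].
\]
By the same reasoning,
\[
\mathbb{E}[Y^2]=\mathbb{E}[A^2]+\mathbb{E}[B^2]=:a+b,
\]
because the cross term $2\,\mathbb{E}[y_n AB]$ vanishes.

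Next, bound the middle term by Cauchy--Schwarz:
\[
\mathbb{E}[A^2B^2]\le\sqrt{\mathbb{E}[A^4]\,\mathbb{E}[B^4]} .
\]
Apply the induction hypothesis (for $n-1$ variables) to $A$ with degree bound $r-1$ and to $B$ with degree bound $r$. This gives $\mathbb{E}[A^4]\le 9^{r-1}a^2$ and $\mathbb{E}[B^4]\le 9^r b^2$. Therefore
\[
\mathbb{E}[Y^4]\le 9^{r-1}a^2+6\cdot 3^{2r-1}ab+9^rb^2\le 9^r a^2+2\cdot 9^r ab+9^r b^2=9^r(a+b)^2 ,
\]
which is exactly $9^r(\mathbb{E}[Y^2])^2$.

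The only delicate point is the bookkeeping of constants in the middle term. The factor $6$ combines with $\sqrt{9^{r-1}\cdot 9^r}=3^{2r-1}$ to give exactly $2\cdot 9^r$, which is precisely what is needed to complete the square. This works only because $A$ has degree one lower than $B$. Everything else is routine. Alternatively, the lemma can simply be cited from \cite{ODonnell2008} as stated.
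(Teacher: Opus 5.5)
Your proof is correct. The expansion $\mathbb{E}[Y^4]=\mathbb{E}[A^4]+6\,\mathbb{E}[A^2B^2]+\mathbb{E}[B^4]$, the Cauchy--Schwarz step and the identity $6\cdot 3^{2r-1}=2\cdot 9^{r}$ all check out. Inducting on $n$ for all $r$ at once supplies exactly the hypotheses you need: degree $r-1$ for $A$ and degree $r$ for $B$, both in $n-1$ variables.

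The paper does not prove the lemma at all. It imports it from \cite{ODonnell2008} as a corollary of the hypercontractivity theorem. The usual derivation writes $Y=T_\rho g$ with $\rho=1/\sqrt3$ and $g=\sum_S\rho^{-|S|}c_S\prod_{i\in S}y_i$, so that $\|Y\|_4\le\|g\|_2\le 3^{r/2}\|Y\|_2$.

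Your route is the direct Bonami induction. It avoids the noise operator and the two-point inequality with tensorization that underlie hypercontractivity. It uses only independence, $y_n^2=1$, vanishing odd moments and Cauchy--Schwarz. It would make the paper self-contained at the cost of a paragraph, and it gives exactly the constant $C=9^r$ used in Lemma~\ref{lem:fourth-moment}.

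The citation buys brevity and the whole family of moment comparisons $\|Y\|_q\le(q-1)^{r/2}\|Y\|_2$ for $q\ge 2$. The paper needs none of these beyond $q=4$.
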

	
	\subsection{$\log(\alpha(G))$ bound}
	
	To prove the key lemma, Lemma \ref{lem:bound1}, of this subsection we will use the fourth-moment probabilistic method very well presented by Berger in \cite{Berger1997}. One of the key lemmas of the method is the following:
	\begin{lemma}\label{lem:fourth-moment}\cite{Berger1997}
		Let $Y$ be a real random variable with
		$\E Y=0,$ $\E Y^2=\sigma^2>0$
		and suppose that
		$
		\E Y^4\le C\sigma^4
		$
		for some $C\ge 1$. Then the probability ${\rm Pr}(Y\ge \sigma/(2\sqrt C))>0$.
	\end{lemma}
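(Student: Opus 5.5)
The natural route is a Paley--Zygmund-type argument applied to $Y$ and to its positive part, using $\E Y=0$ to tie the positive and negative parts together. Suppose for contradiction that $\Pr(Y\ge \sigma/(2\sqrt C))=0$, so that $Y<\sigma/(2\sqrt C)$ almost surely. Write $Y=Y^+-Y^-$ with $Y^\pm\ge 0$.

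First, the mean-zero condition gives $\E Y^+=\E Y^-=\tfrac12\E|Y|$. Next, I lower-bound $\E|Y|$ by Hölder in the form $\E Y^2\le (\E|Y|)^{2/3}(\E Y^4)^{1/3}$, obtained by writing $Y^2=|Y|^{2/3}|Y|^{4/3}$ and applying Hölder with exponents $3/2$ and $3$. This yields
\[
\sigma^2\le (\E|Y|)^{2/3}C^{1/3}\sigma^{4/3},
\]
hence $\E|Y|\ge \sigma/\sqrt C$ and so $\E Y^+\ge \sigma/(2\sqrt C)$.

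On the other hand, the assumption gives $Y^+<\sigma/(2\sqrt C)$ almost surely, so $\E Y^+<\sigma/(2\sqrt C)$ whenever $\Pr(Y^+>0)>0$. If instead $Y^+=0$ almost surely, then $\E Y^+=0<\sigma/(2\sqrt C)$, since $\sigma>0$. Either case contradicts the lower bound, and so $\Pr(Y\ge\sigma/(2\sqrt C))>0$.

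The only subtle step is the strictness of the final inequality. If $Y^+$ is positive with positive probability, it is strictly below the threshold on an event of positive probability, so $\E Y^+$ is strictly less than the threshold. If $Y^+$ vanishes almost surely, its expectation is $0$, which is strictly below the positive threshold. The inequality is therefore strict in both cases, and no finer estimate is needed; the Hölder step is where the constant $1/(2\sqrt C)$ comes from.
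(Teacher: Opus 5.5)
Your proof is correct and is essentially the standard argument behind the result the paper cites from Berger; the paper itself gives no proof, only the citation. H\"older with exponents $3/2$ and $3$ gives $\E|Y|\ge \sigma/\sqrt C$, and $\E Y=0$ gives $\E Y^+=\tfrac12\E|Y|$, which contradicts $Y<\sigma/(2\sqrt C)$ almost surely. Your final case split is unnecessary, since $Y^+<\sigma/(2\sqrt C)$ almost surely already gives $\E\bigl[\sigma/(2\sqrt C)-Y^+\bigr]>0$ directly.
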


	For every $x\in X$, put $A_x=N(x)\cap D.
	$
	The sets $A_x$ are non-empty because $D$ dominates $X$.  Let
	$
	r=\max_{x\in X}|A_x|.
	$
	The following bound deals with the case when $r$ is small.
	
	\begin{lemma}\label{lem:bound1}
		For every graph $G$ with $\delta(G)\geq 1$,
		\[
		f_o(G) \;\ge\; \frac{\alpha(G)}{2} + \frac{\sqrt{\alpha(G)}}{4 \cdot 3^{\,r}}.
		\]
	\end{lemma}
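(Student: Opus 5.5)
We apply Lemma~\ref{lem:simple-lemma2} with a random $U\subseteq D$ and compare $|U|+|R(U)|$ with $\alpha(G)/2$. Put $|D|=d$, so $|P|=d$ and $|X_0|=\alpha-d$, where $\alpha=\alpha(G)$. Choose $U$ by including each $v\in D$ independently with probability $1/2$. Encode the choice by i.i.d.\ signs $y_v\in\{\pm1\}$, with $y_v=-1$ meaning $v\in U$. Then $\1[v\in U]=(1-y_v)/2$. For $x\in X_0$ we have $\1[x\in R(U)]=\bigl(1-\prod_{v\in A_x}y_v\bigr)/2$, since $|N(x)\cap U|=|A_x\cap U|$.

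It follows that
\[
|U|+|R(U)|=\frac{d}{2}+\frac{\alpha-d}{2}-Y=\frac{\alpha}{2}-Y,
\]
where
\[
Y:=\frac12\sum_{v\in D}y_v+\frac12\sum_{x\in X_0}\prod_{v\in A_x}y_v .
\]
Here $Y$ is a multilinear polynomial of degree at most $r$ with $\E Y=0$, because every $A_x$ is nonempty. Since $-Y$ has the same form as $Y$ (flip all the $y_v$), it suffices to show that $-Y\ge \sqrt{\alpha}/(4\cdot 3^r)$ with positive probability.

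To compute $\E Y^2$, group the terms by their Fourier character. Write $Y=\frac12\sum_{S}c_S\chi_S$, where $\chi_S=\prod_{v\in S}y_v$ and $c_S=\#\{x\in X_0:A_x=S\}+\1[|S|=1]$. Then $\E Y^2=\frac14\sum_S c_S^2$. Since $c_S^2\ge c_S$ for nonnegative integers and $\sum_S c_S=d+(\alpha-d)=\alpha$, we get $\sigma^2:=\E Y^2\ge \alpha/4$, so $\sigma\ge\sqrt{\alpha}/2$. By Lemma~\ref{lem:hct}, $\E Y^4\le 9^r\sigma^4$. Apply Lemma~\ref{lem:fourth-moment} to $-Y$ with $C=9^r$. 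This gives, with positive probability,
\[
-Y\ge \frac{\sigma}{2\cdot 3^r}\ge\frac{\sqrt{\alpha}}{4\cdot 3^r}.
\]
Fix such a $U$. Lemma~\ref{lem:simple-lemma2} then yields
\[
f_o(G)\ge |U|+|R(U)|=\frac{\alpha}{2}-Y\ge\frac{\alpha}{2}+\frac{\sqrt{\alpha}}{4\cdot 3^r}.
\]

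The main thing to check is the variance lower bound. Distinct $x$ may share the same $A_x$, and a singleton $A_x=\{v\}$ may coincide with the linear term $y_v$. These coincidences only increase coefficients, and they never cancel because all coefficients are positive, so the bound $\sum_S c_S^2\ge\sum_S c_S=\alpha$ handles them uniformly. The remaining points are verifying that $-Y$ is still multilinear of degree at most $r$ and that the $\pm1$ encoding matches the parity definition of $R(U)$.
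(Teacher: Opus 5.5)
Your proof is correct and follows the paper's argument. Your $Y$ is $-\tfrac12$ times the paper's $Y(U)=a_1-a_0$: the linear term $\tfrac12\sum_{v\in D}y_v$ is the contribution of the private neighbours $p_v$ (since $A_{p_v}=\{v\}$), and your bound $\sum_S c_S^2\ge\alpha$ is the paper's count of pairs $(x,y)\in X^2$ with $A_x=A_y$. One harmless slip: flipping all $y_v$ does not turn $Y$ into $-Y$ unless every $|A_x|$ is odd, but you do not need this, since $-Y$ is trivially multilinear of degree at most $r$ with the same second and fourth moments as $Y$.
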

	
	\begin{proof}
		Recall that $A_x = N(x) \cap D$ and $A_x \neq \emptyset$ for every $x \in X$. For every subset $U\subseteq D$, we define
		$a_1=|\{x \in X : |A_x \cap U|\text{ odd}\}|$, $a_0=|\{x \in X : |A_x \cap U|\text{ even}\}|,$ and 
		\[
		Y(U) \;:=\; -\sum_{x \in X} (-1)^{|A_x \cap U|}=a_1-a_0.
		\]
		Since $A_{p_v} = \{v\}$, a vertex $p_v\in P$ has $|A_{p_v} \cap U|$ odd
		if and only if $v \in U$. Hence $$|U| = |\{p_v : |A_{p_v} \cap U|\text{ odd}\}|.$$
		Recall that $R(U) = \{x \in X_0 : |A_x \cap U|\text{ odd}\}$. Observe that $a_1+a_0=|X|=\alpha(G).$ Thus,
		\begin{equation}\label{eq:ZY}
			Z(U) \;:=\; |U| + |R(U)| \;=\;  a_1 \;=\; \frac{\alpha(G) + Y(U)}{2}
		\end{equation}

		Let $\mathbf{u}\in \{+1,-1\}^{|D|}$ be chosen uniformly at random from all vectors in $\{+1,-1\}^{|D|}$. For every such $\mathbf{u}$, we define $U(\mathbf{u}):=\{v\in D:u_v=-1\}$. Since $(-1)^{|A_x \cap U(\mathbf{u})|} = \prod_{v \in A_x} u_v$,
		\[
		Y(U(\mathbf{u})) \;=\; -\sum_{x \in X} \, \prod_{v \in A_x} u_v
		\]
		is a multilinear polynomial of degree $\max_{x \in X} |A_x| = r$.
		
		\smallskip
		For each $x \in X$ we have
		$\mathbb{E}\, [\prod_{v \in A_x} u_v] = \prod_{v \in A_x} \mathbb{E}\,[u_v] = 0$, since $A_x \neq \emptyset$. Therefore, $\mathbb{E}\,[Y(U(\mathbf{u}))] = 0$.
		
		\smallskip
		Now we estimate the variance of $Y(U(\mathbf{u}))$. We have
		\[
		\mathbb{E}\!\left[
		\prod_{v \in A_x} u_v \prod_{w \in A_y} u_w
		\right]
		= \mathbb{E} \left[\,\!\!\prod_{v \in A_x \triangle A_y}\!\! u_v\right]
		= \mathbf{1}[\{A_x = A_y\}],
		\]
		where $\mathbf{1}[A_x = A_y]=1$ if $A_x = A_y$ and 0 otherwise. Summing over ordered pairs, we have
		\begin{equation}\label{eq:var}
			\mathbb{E}\,[Y(U(\mathbf{u}))^2]
			= |\{(x,y) \in X^2 : A_x = A_y\}|
			\;\ge\; \alpha(G).
		\end{equation}
		
		\smallskip
		As $Y(U(\mathbf{u}))$ has degree at most $r$, applying Lemma \ref{lem:hct}, we have
		\[
		\mathbb{E}\,[Y(U(\mathbf{u}))^4] \;\le\; 9^{\,r} \,(\mathbb{E}\, [Y(U(\mathbf{u}))^2])^2.
		\]
		
		\smallskip
		Apply Lemma~\ref{lem:fourth-moment} to $Y(U(\mathbf{u}))$,  with $\sigma^2 = \mathbb{E}\,[Y(U(\mathbf{u}))^2]$ and $C = 9^{\,r}$, so that $\sqrt{C} = 3^{\,r}$.
		This produces a set $U \subseteq D$ with $Y(U) \ge \sigma / (2\sqrt{C})$. Combining \eqref{eq:ZY}, \eqref{eq:var}, and this bound,
		\[
		Z(U) = \frac{\alpha(G) + Y(U)}{2}
		\;\ge\; \frac{\alpha(G)}{2} + \frac{\sqrt{\mathbb{E}\,[Y(U(\mathbf{u}))^2]}}{4 \cdot 3^{\,r}}
		\;\ge\; \frac{\alpha(G)}{2} + \frac{\sqrt{\alpha(G)}}{4 \cdot 3^{\,r}}.
		\]
		 Thus, by Lemma \ref{lem:simple-lemma2}, $f_o(G)\geq Z(U)\geq \frac{\alpha(G)}{2} + \frac{\sqrt{\alpha(G)}}{4 \cdot 3^{\,r}}$, completing the proof. 
	\end{proof}
	
	The following lemma deals with the case when $r$ is large. 
	
	\begin{lemma}\label{lem:bound2}
		For every graph $G$ with $\delta(G)\geq 1$,
		\[f_o(G) \ge \dfrac{\alpha(G)}{2} + \dfrac{r}{4}.\]
	\end{lemma}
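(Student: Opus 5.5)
Fix a vertex $x_0\in X$ with $|A_{x_0}|=r$, and work with the same quantity $Z(U)=|U|+|R(U)|=a_1(U)$ as in Lemma~\ref{lem:bound1}. By Lemma~\ref{lem:simple-lemma2} it suffices to find $U\subseteq D$ with $a_1(U)\ge \alpha(G)/2+r/4$. The idea is to stop sampling $U$ uniformly over all of $D$ and instead fix $U\cap A_{x_0}$ cleverly, so that the private neighbours of the vertices of $A_{x_0}$, together with $x_0$ itself, contribute more than their expected half.

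Write $A:=A_{x_0}$. For $v\in A$ the private neighbour $p_v\in P$ lies in $R$-style counting exactly when $v\in U$, and $x_0$ counts when $|U\cap A|$ is odd. The plan is as follows.

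\textbf{Step 1: choose $U\cap A$ deterministically.} Take $U\cap A:=A$ if $r$ is odd. If $r$ is even, take $U\cap A:=A\setminus\{v_0\}$ for some $v_0\in A$. Then the private neighbours $\{p_v:v\in A\}$ together with $x_0$ contribute at least $r$ to $a_1$.

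\textbf{Step 2: randomize the rest.} Choose $U\cap(D\setminus A)$ uniformly at random. Every $x\in X\setminus(\{x_0\}\cup\{p_v:v\in A\})$ with $A_x\not\subseteq A$ has $|A_x\cap U|$ odd with probability exactly $1/2$, by exposing a vertex of $A_x\setminus A$ last.

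\textbf{Step 3: handle the problematic vertices.} The remaining vertices are those $x$ with $A_x\subseteq A$ and $x\notin\{x_0\}\cup P_A$; their parity is fixed by Step 1. To handle them, I would not fix $U\cap A$ once and for all. Instead, pick $U\cap A$ uniformly at random among subsets $T\subseteq A$ with $|T|\ge r/2$ in a symmetric way. More simply, average over $T$ and its complement $A\setminus T$ restricted to a structure that guarantees a gain. The cleanest version is to compare $T$ with the uniformly random $T$, using the following accounting:
\begin{itemize}
\item for a uniformly random $T\subseteq A$, each $x$ with $\emptyset\ne A_x\subseteq A$ is odd with probability $1/2$;
\item the vertices $p_v$ ($v\in A$) contribute $|T|$;
\item by the above, $\mathbb E\,a_1=\alpha/2$.
\end{itemize}
Now condition on $|T|$ and use a weighting: choose $T$ with probability biased towards large size, e.g.\ each $v\in A$ included independently with probability $p=3/4$. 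Then $p_v$ contributes $3r/4$ in total instead of $r/2$, a gain of $r/4$. The vertices whose $A_x$ meets $D\setminus A$ stay at exactly $1/2$, as in Step 2.

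\textbf{Step 4: the main obstacle.} The difficulty is the vertices $x$ with $A_x\subseteq A$, $|A_x|\ge 2$, which under the bias have odd probability $(1-(-1/2)^{|A_x|})/2$. This can drop to $3/8$ when $|A_x|$ is even, and there may be many such $x$, so the gain of $r/4$ can be wiped out. My first attempt would be to use $x_0$ and symmetry instead: compare the choices $T$ and $T\triangle A$. Each $x$ with $A_x\subseteq A$ and $|A_x|$ even has the same parity under both, while each odd-size $A_x$ flips. I would therefore pair the deterministic choice $T=A$ against $T=\emptyset$ (plus random outside $A$). Under $T=A$, every vertex with $A_x\subseteq A$ and $|A_x|$ odd is counted, including all $p_v$ and $x_0$ when $r$ is odd. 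Under the complementary random choice, the even-size ones are balanced. The expected remaining loss must then be bounded by showing that some choice among $\{A,\ \text{random}\}$ yields at least $\alpha/2+r/4$, via an averaging of the two strategies with weights $1/2$. This is where I expect the real work to lie.
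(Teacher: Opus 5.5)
\textbf{There is a genuine gap, and it is in your opening reduction, not only in Step~4.} You aim to find $U\subseteq D$ with $a_1(U)=|U|+|R(U)|\ge \alpha(G)/2+r/4$, but such a $U$ need not exist.

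Take the graph $G_d$ of Proposition~\ref{prop:tight}. There $X=\mathbb{F}_2^d\setminus\{0\}$, and $D=[d]$ is forced because $e_i$ is adjacent only to $i$. Then $A_x=\supp(x)$, so $r=d$ and $\alpha=2^d-1$. For every nonempty $U\subseteq D$,
\[
a_1(U)=\bigl|\{x\in\mathbb{F}_2^d\setminus\{0\}:\ \langle x,\mathbf{1}_U\rangle=1\}\bigr|=2^{d-1}=\frac{\alpha}{2}+\frac12 ,
\]
which is strictly below $\alpha/2+d/4$ once $d\ge 3$. Here $D=A_{x_0}$, so every vertex is of your ``problematic'' type. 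No deterministic, biased or mixed choice of $U\cap A_{x_0}$ can help, because the maximum of $a_1$ over all $U$ is already too small. So Steps 1--4 cannot be completed. Counting only $|U|+|R(U)|$ discards exactly the slack the lemma needs, and the averaging in Step~4 has nothing to average towards.

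\textbf{The missing idea is a second odd set, obtained from $S(U)$ by an exchange.} For $x_0\in R(U)$ with $W=N(x_0)\cap U$, put $P_W=\{p_w:w\in W\}$ and $a_W=|Q(U)\cap P_W|$. Then $|S(U)|\ge|U|+|R(U)|+a_W$.

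The set $S'=(S(U)\setminus\{x_0\})\triangle P_W$ is also odd:
\begin{itemize}
\item removing $x_0$ flips the parity at each $w\in W$, and toggling $p_w$ flips it back;
\item each inserted $p_w$ has degree $1$;
\item nothing else changes, because the $p_w$ are private neighbours and $N(x_0)\cap U=W$.
\end{itemize}
Since $|S'|\ge|U|+|R(U)|+|W|-1-2a_W$, the better of the two odd sets gives
\[
f_o(G)\ge|U|+|R(U)|+\frac{|W|-1}{2}.
\]

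Now assume $r\ge2$; the case $r=1$ is trivial with $U=D$. Choose $U$ uniformly subject to $|A_{x_0}\cap U|$ being odd. This conditioning, rather than your bias, does what you needed in Step~4:
\begin{itemize}
\item every $x$ with $A_x\neq A_{x_0}$ is odd with probability exactly $1/2$, and $x_0$ is always odd, so $\mathbb{E}\bigl[|U|+|R(U)|\bigr]\ge\alpha/2+1/2$;
\item each $v\in A_{x_0}$ lies in $U$ with probability $1/2$, so $\mathbb{E}\bigl[(|W|-1)/2\bigr]=r/4-1/2$.
\end{itemize}
Adding the two gives $\alpha/2+r/4$. The gain of order $r$ comes from the exchange term, not from $a_1$, which in general gains only $1/2$ over $\alpha/2$.
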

	
	\begin{proof}
		We first show that the following claim holds.
		\begin{claim}\label{eq:exchange}
			For every $U \subseteq D$, $x_0 \in R(U)$ and $W = N(x_0) \cap U$, we have 
			\[f_o(G) \;\ge\; |U| + |R(U)| + \frac{|W| - 1}{2}.\]
		\end{claim}
		\begin{proof}
			Indeed, by Lemma~\ref{lem:simple-lemma2}, $H := G[S(U)]$ is odd, with $S(U) = U \cup R(U) \cup Q(U)$ and
			$Q(U) \subseteq \{p_v : v \in U\}$. Each $p_v$ satisfies $N(p_v) \cap D = \{v\}$,
			so inside $U \cup R(U) \cup P$ it is adjacent only to $v$; and
			$x_0 \in R(U) \subseteq X_0$, so $x_0 \notin P$ and in $H$ the vertex $x_0$ is
			adjacent exactly to $W$. Let $P_W = \{p_v : v \in W\}$,
			$a_W = |Q(U) \cap P_W|$, and set
			\[
			S' = \bigl(S(U) \setminus \{x_0\}\bigr) \,\triangle\, P_W .
			\]
			Then $G[S']$ is odd: for $v \in W$, deleting $x_0$ lowers $\deg(v)$ by $1$ and
			toggling $p_v$ changes it by $\pm 1$, restoring parity; each inserted $p_v$ has
			degree $1$; no other degree changes, since each $p_w$ $(w\in W)$ is adjacent
			only to $w$, while $x_0$ and the $p_w$ have no neighbours in
			$R(U) \cup Q(U)$ or $U \setminus W$. Hence
			\[
			|S'| = |S(U)| - 1 - a_W + (|W| - a_W) \geq |U|+|R(U)| + |W| - 1 - 2a_W .
			\]
			As $|S(U)| = |U| + |R(U)| + |Q(U)| \ge |U| + |R(U)| + a_W$, the odd subgraphs
			$H$ and $G[S']$ give
			\[
			f_o(G) \ge \max\{|S(U)|, |S'|\}
			\ge |U| + |R(U)| + \max\{a_W,\, |W| - 1 - a_W\}
			\ge |U| + |R(U)| + \frac{|W| - 1}{2},
			\]
			completing the proof.
		\end{proof}
		
		If $r = 1$, take $U = D$, then $|A_x \cap U| = 1$ for all $x \in X$, so $Z(U) := |U| + |R(U)| = \alpha(G)$ and $f_o(G) \ge \alpha(G) \ge \frac {\alpha(G)}{2} + \frac r4$.
		
		Assume $r \ge 2$. Let $x_0 \in X$ satisfying $|A_{x_0}| = r$ and choose $U \subseteq D$ randomly and uniformly subject to
		\begin{equation}\label{eq:cond}
			|A_{x_0} \cap U| \text{ odd.}
		\end{equation}
		Then \VV{$x_0\in X_0$ (indeed, $|A_{p_v}|=1$ for every $p_v\in P$, while $|A_{x_0}|=r\ge 2$) and thus} $x_0 \in R(U)$ and $W = A _{x_0}\cap U$. Therefore, Claim \ref{eq:exchange} implies
		\[
		f_o(G) \;\ge\; |U|+ |R(U)| + \frac{|A_{x_0} \cap U| - 1}{2}.
		\]
		Under \eqref{eq:cond}, for $A_x \ne A_{x_0}$ the parity of $|A_x \cap U|$ is balanced
		by the nontrivial linear condition, so $\mathbb{P}(|A_x \cap U|\text{ odd}) = \tfrac12$,
		while it is $1$ when $A_x = A_{x_0}$; hence $\mathbb{E}\,[|U|+|R(U)|] \ge \frac{\alpha(G)}{2} + \frac12$,
		as at least $x_0$ has $A_{x_0} = A_{x_0}$. Also each element \VV{$v$} of $A_{x_0}$ lies in $U$ with probability $\tfrac12$ \VV{(here $r\ge 2$ is used: under \eqref{eq:cond}, $v\in U$ if and only if $|(A_{x_0}\setminus\{v\})\cap U|$ is even, which happens with probability $\tfrac12$ as $A_{x_0}\setminus\{v\}\neq\emptyset$)}, so $\mathbb{E}\,|A_{x_0} \cap U| = \frac r2$ and
		$\mathbb{E}\bigl[\frac{|A_{x_0} \cap U| - 1}{2}\bigr] = \frac r4 - \frac12$. Hence,
		\[
		f_o(G)\geq\mathbb{E}\!\left[ |U| + |R(U)| + \frac{|A_{x_0} \cap U| - 1}{2} \right]
		\ge \frac{\alpha(G)}{2} + \frac r4,
		\]
		completing the proof. 
	\end{proof}
	\begin{theorem}\label{thm:logbound}
		For every graph $G$ with $\delta(G)\geq 1$ and $\alpha(G)\ge 2$,
		\[f_o(G) \ge \frac{\alpha(G)}{2} + \frac{\log_3(\alpha(G))}{8}-\frac{\log_3\log_3(\sqrt{\alpha(G)})}{4}.\]
	\end{theorem}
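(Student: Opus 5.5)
We combine Lemma~\ref{lem:bound1} and Lemma~\ref{lem:bound2}: both hold for the same $X$, $D$, $r$. Together they give
\[
f_o(G)\ \ge\ \frac{\alpha}{2}+\max\Bigl\{\frac{\sqrt{\alpha}}{4\cdot 3^{r}},\ \frac r4\Bigr\},
\]
where $\alpha=\alpha(G)$. So it suffices to show that, for every integer $r\ge1$,
\[
\max\Bigl\{\frac{\sqrt{\alpha}}{4\cdot 3^{r}},\ \frac r4\Bigr\}\ \ge\ \frac{\log_3\alpha}{8}-\frac{\log_3\log_3\sqrt{\alpha}}{4}.
\]
Write $L:=\log_3\sqrt{\alpha}=\tfrac12\log_3\alpha$. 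The target is then $\tfrac14\bigl(L-\log_3 L\bigr)$.

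We split on $r$ with threshold $r_0:=L-\log_3 L$.
\begin{itemize}
\item If $r\ge r_0$, Lemma~\ref{lem:bound2} gives $r/4\ge \tfrac14(L-\log_3 L)$, which is exactly the claimed bound.
\item If $r<r_0$, then $3^r<3^{L}/L=\sqrt{\alpha}/L$. Hence $\sqrt{\alpha}/(4\cdot 3^r)> L/4\ge \tfrac14(L-\log_3L)$ whenever $\log_3 L\ge 0$, i.e.\ $L\ge1$, i.e.\ $\alpha\ge 9$.
\end{itemize}
So for $\alpha\ge 9$ the two cases cover everything. The threshold is chosen to balance $3^r$ against $\sqrt{\alpha}$, with the $\log_3 L$ loss coming from the factor $L$.

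The only delicate point is the range $2\le\alpha<9$, where $0<L<1$. Here $\log_3 L<0$, so the target $\tfrac14(L-\log_3 L)$ exceeds $L/4$, and the second case breaks down. In this range we argue directly instead: $r\ge1$ always, so Lemma~\ref{lem:bound2} gives $f_o\ge \alpha/2+1/4$. It then suffices to check that $L-\log_3 L\le 1$ for $L\in[\tfrac12\log_3 2,1)$. The function $g(L)=L-\log_3 L$ has $g'(L)=1-1/(L\ln 3)<0$ for $L<1/\ln3\approx0.91$, so $g$ is decreasing there and increasing on $[1/\ln 3,1)$. Its maximum on this interval is therefore at an endpoint. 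At $L=1$ we get $g=1$, fine. But at $L=\tfrac12\log_3 2\approx 0.315$ we get $g\approx0.315+1.05>1$, which fails.

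So for small $\alpha$ (roughly $\alpha\in\{2,3\}$) we need more than $r/4$. We use the trivial bound $f_o(G)\ge\alpha$ when $r=1$ (shown inside the proof of Lemma~\ref{lem:bound2}), which gives an extra $\alpha/2\ge1$. When $r\ge2$, Lemma~\ref{lem:bound2} gives $f_o\ge\alpha/2+1/2$. We then verify numerically that $\tfrac14 g(L)\le \tfrac12$, i.e.\ $g(L)\le2$, on the whole interval $L\in[0.315,1]$; this holds since $g(0.315)\approx1.37$ and $g(1)=1$. Hence for $r\ge2$ the bound $\alpha/2+1/2$ suffices, and for $r=1$ the bound $\alpha\ge\alpha/2+1$ suffices.

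The main obstacle is this small-$\alpha$ bookkeeping. The asymptotic balancing is routine.
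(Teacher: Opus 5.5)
Your proof is correct and follows the paper's argument. For $\alpha\ge 9$ you use the same threshold $r_0=\log_3\sqrt{\alpha}-\log_3\log_3\sqrt{\alpha}$ to switch between Lemma~\ref{lem:bound2} and Lemma~\ref{lem:bound1}. For $2\le\alpha\le 8$ you use the same split into $r=1$ (giving $f_o\ge\alpha$) and $r\ge 2$ (giving $f_o\ge\alpha/2+1/2$). The only difference is cosmetic: on the small range you bound the additive term through the unimodality of $g(L)=L-\log_3 L$, getting about $0.34$. The paper instead bounds its two summands separately at opposite endpoints, getting about $0.499<1/2$.
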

	\begin{proof}
		Let $\alpha:=\alpha(G)$. 
		Assume that $2\le\alpha\le 8$. If $r=1$ then $f_o\ge\alpha\ge\alpha/2+1$
		(as in the first case of \VV{Lemma~\ref{lem:bound2}'s} proof), and if $r\ge 2$ then
		\VV{Lemma~\ref{lem:bound2}} gives $f_o\ge\alpha/2+1/2$, while the additive term satisfies\VV{, bounding its two terms separately (the first term is increasing in $\alpha$ and the second is decreasing in $\alpha$),}
		\[
		\frac{\log_3\alpha}{8}-\frac{\log_3\log_3\sqrt\alpha}{4}
		\ \le\ \frac{\log_3 8}{8}-\frac{\log_3\log_3\sqrt 2}{4}
		\ \approx\ 0.2366+0.2625\ <\ \frac12.
		\]

		Now assume that $\alpha\ge 9$. If $r\ge \frac{\log_3 \alpha}{2}-\log_3\log_3(\sqrt{\alpha})$, we are done by Lemma \ref{lem:bound2}. If $r\le \frac{\log_3(\alpha(G))}{2}-\log_3\log_3(\sqrt{\alpha})$, then
		
		\[\frac{\sqrt{\alpha}}{ 3^{\,r}}\geq \frac{\sqrt{\alpha}}{\sqrt{\alpha}/\log_3(\sqrt{\alpha})}=\frac{\log_3(\alpha)}{2},\]
		and therefore we are done by Lemma \ref{lem:bound1}.
	\end{proof}
	
	The following examples show that our bound is tight up to a constant. 
	
	\begin{proposition}\label{prop:tight}
		For every $d \ge 1$ there is a bipartite graph $G_d$ with
		$\delta(G_d) \ge 1$ and $\alpha(G_d) = 2^d - 1$ such that
		\[
		f_o(G_d) \;\le\; \frac{\alpha(G_d)}{2} + \log_2\!\bigl(\alpha(G_d) + 1\bigr) + \frac12.
		\]
		Consequently the additive term in Theorem~\ref{thm:logbound} is of optimal order.
	\end{proposition}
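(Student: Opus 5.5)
We use the coordinate construction. Fix $d\ge 1$ and let $G_d$ be the bipartite graph with parts
\[
X=\mathbb{F}_2^d\setminus\{0\},\qquad Y=\{y_1,\dots,y_d\},
\]
where $x\in X$ is adjacent to $y_i$ if and only if $x_i=1$. Then $|X|=2^d-1$. Every $x\in X$ is nonzero, so it has a neighbour. Every $y_i$ is adjacent to the $i$-th unit vector. Hence $\delta(G_d)\ge 1$.

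First we compute $\alpha(G_d)$. The set $X$ is independent, so $\alpha(G_d)\ge 2^d-1$. Conversely, take any independent set $T\cup Z$ with $T\subseteq Y$ and $Z\subseteq X$. Every vector of $Z$ must vanish on the coordinates indexed by $T$. So $|Z|\le 2^{d-|T|}-1$, and
\[
|T\cup Z|\le |T|+2^{d-|T|}-1 .
\]
We claim this is at most $2^d-1$ for $0\le |T|\le d$. It suffices to check $t+2^{d-t}\le 2^d$ for integers $0\le t\le d$. This holds with equality at $t=0$, and for $t\ge 1$ we have $t\le 2^{d-1}\cdot t/ d\cdot$ trivially bounded: $t+2^{d-t}\le t+2^{d-1}\le 2^{d-1}+2^{d-1}$, using $t\le d\le 2^{d-1}$. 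Thus $\alpha(G_d)=2^d-1$.

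Next we bound $f_o(G_d)$. Let $S\cup T$ be odd with $S\subseteq X$ and $T\subseteq Y$. By Observation~\ref{obs:oddpairs} (with $M$ the bipartite adjacency matrix, so $(M\mathbf 1_T)_x=\langle x,\mathbf 1_T\rangle$), every $x\in S$ satisfies $\langle x,\mathbf 1_T\rangle=1$.

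If $S=\emptyset$, the set is $T\subseteq Y$, which induces no edges. It is odd only when empty, so its order is $0$. Otherwise $T\neq\emptyset$. The vectors $x\in\mathbb{F}_2^d$ with $\langle x,\mathbf 1_T\rangle=1$ form an affine hyperplane of size exactly $2^{d-1}$, none of them zero. Hence $|S|\le 2^{d-1}$. Also $|T|\le d$ trivially. Therefore
\[
f_o(G_d)\le 2^{d-1}+d=\frac{\alpha(G_d)+1}{2}+\log_2(\alpha(G_d)+1)=\frac{\alpha(G_d)}{2}+\log_2(\alpha(G_d)+1)+\frac12 .
\]

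Finally, Theorem~\ref{thm:logbound} shows that $f_o(G)-\alpha(G)/2$ is at least $\frac{1}{8}\log_3\alpha-O(\log\log\alpha)=\Omega(\log\alpha)$ for all such graphs. The family $G_d$ shows that this excess is $O(\log\alpha)$ in general. So the additive term in Theorem~\ref{thm:logbound} is of optimal order.

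The only step needing care is the computation of $\alpha(G_d)$, specifically the elementary inequality $t+2^{d-t}\le 2^d$. The odd-subgraph bound is immediate from the hyperplane count.
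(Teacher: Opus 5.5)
Your proof is correct and follows the paper's argument: the same graph $G_d$ on $(\mathbb{F}_2^d\setminus\{0\})\cup[d]$, and the same bound $f_o(G_d)\le 2^{d-1}+d$, obtained by confining $S$ to the affine hyperplane $\{x:\langle x,\mathbf{1}_T\rangle=1\}$ of size $2^{d-1}$. The only difference is how you verify $\alpha(G_d)=2^d-1$: you count independent sets directly via $t+2^{d-t}\le 2^d$, whereas the paper uses K\"onig's theorem with the matching $\{i,e_i\}$. Both work, but delete the garbled fragment ``$t\le 2^{d-1}\cdot t/d\cdot$ trivially bounded'' that precedes your valid chain $t+2^{d-t}\le t+2^{d-1}\le 2^d$.
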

	
	\begin{proof}
		\VV{Let $[d] = \{1, \dots, d\}$ and let $X = \mathbb{F}_2^d \setminus \{0\}$, where the coordinates of vectors in $\mathbb{F}_2^d$ are indexed by $[d]$.} Define $G_d$ on vertex set
		$X \cup [d]$ by joining $v \in X$ to $i \in [d]$ exactly when $v_i = 1$. For every $i\in [d]$, let $e_i\in \mathbb{F}_2^d$ be \VV{the vector whose components are all $0$ except the $i$-th, which equals $1$}. Clearly, $G_d$ is bipartite, and $\delta(G_d) \ge 1$.
		
		\VV{$X$ is an independent set of size $2^d - 1$.} Since $G_d$ is bipartite, König's theorem gives $\alpha(G_d) = |V(G_d)| - \beta(G_d)$, where $\beta(G_d)$ is the size of the maximum matching in $G_d$. As the edges $\{i, e_i\}$ $(i \in [d])$ form a matching of size $d$ and $|[d]|=d$, we have that $\beta(G_d) = d$ and
		$\alpha(G_d) = (2^d - 1 + d) - d = 2^d - 1$, attained by $X$.
		
		Suppose that $T \subseteq V(G_d)$ induces an odd subgraph with $X' = T \cap X$ and $D' = T \cap [d]$. Clearly, $D' \ne \emptyset$. For $v \in X'$, the degree of $v$ in $G_d[T]$ is odd only if the inner product over $\mathbb{F}_2$, $\langle v, \mathbf{1}_{D'} \rangle=1$.  Hence
		\[
		X' \subseteq H := \{v \in \mathbb{F}_2^d :
		\langle v, \mathbf{1}_{D'} \rangle = 1\}.
		\]
		Fix any $j \in D'$, the map $v \mapsto v + e_j$ has no fixed-point
		on $\mathbb{F}_2^d$. In addition, as $j \in D'$ it satisfies
		$\langle v + e_j, \mathbf{1}_{D'}\rangle = \langle v, \mathbf{1}_{D'}\rangle + 1$ and therefore it interchanges $H$ with its complement. Thus, $|X'|\leq |H| = 2^{d-1}$ and therefore
		\[
		|T| = |X'| + |D'| \le 2^{d-1} + d .
		\]
		
		As this holds for arbitrary $T$, $f_o(G_d) \le 2^{d-1} + d$. Since $\alpha(G_d) = 2^d - 1$ we have $2^{d-1} = \tfrac{\alpha(G_d)+1}{2}$ and
		$d = \log_2(\alpha(G_d) + 1)$, giving the stated bound. 
	\end{proof}

	\begin{corollary}
		For a graph $G$ with $\delta(G)\ge 1$, $\alpha(G)\ge 2$, and $|V(G)|\ge 2\chi(G)$, we have \[
		f_o(G)\ge
		\frac{|V(G)|}{2\chi(G)}
		+
		\frac{\log_3(|V(G)|/\chi(G))}{8}
		-
		\frac14
		\log_3\log_3
		\sqrt{\frac{|V(G)|}{\chi(G)}}.
		\]
	\end{corollary}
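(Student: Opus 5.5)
We deduce the corollary from Theorem~\ref{thm:logbound} by substituting the standard estimate $\alpha(G)\ge |V(G)|/\chi(G)$. The only issue is that the right-hand side of Theorem~\ref{thm:logbound},
\[
g(a):=\frac a2+\frac{\log_3 a}{8}-\frac14\log_3\log_3\sqrt a ,
\]
is not obviously monotone in $a$. So the plan is: first show $\alpha(G)\ge n/\chi(G)=:a_0$ (with $n=|V(G)|$), then show $g$ is nondecreasing on the relevant range, and conclude $f_o(G)\ge g(\alpha(G))\ge g(a_0)$.

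\textbf{Step 1.} The colour classes of a proper $\chi(G)$-colouring partition $V(G)$ into $\chi(G)$ independent sets. The largest of them therefore has size at least $n/\chi(G)$, so $\alpha(G)\ge a_0$. The hypothesis $n\ge 2\chi(G)$ gives $a_0\ge 2$. This ensures $\log_3\log_3\sqrt{a_0}$ is defined, since $\sqrt{a_0}\ge\sqrt2>1$ and hence $\log_3\sqrt{a_0}>0$. The hypotheses $\delta(G)\ge1$ and $\alpha(G)\ge 2$ let us apply Theorem~\ref{thm:logbound} to get $f_o(G)\ge g(\alpha(G))$.

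\textbf{Step 2 (the main point).} We show $g$ is increasing on $[2,\infty)$. Write $L=\ln 3$. Since $\log_3\log_3\sqrt a=\log_3\bigl(\tfrac12\log_3 a\bigr)$, differentiation gives
\[
g'(a)=\frac12+\frac{1}{8aL}-\frac{1}{4aL\ln a}.
\]
The only negative term has absolute value at most $\frac{1}{4\cdot 2\cdot L\cdot \ln 2}\approx 0.164<\frac12$ for $a\ge2$. Hence $g'(a)>0$ on $[2,\infty)$, so $g$ is increasing there.

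Since $\alpha(G)\ge a_0\ge 2$, Steps~1 and~2 give $f_o(G)\ge g(\alpha(G))\ge g(a_0)$. Substituting $a_0=n/\chi(G)$ yields exactly the displayed bound. The one step needing care is the monotonicity in Step~2: it fails near $a=1$, where $\ln a\to0$, which is why the hypothesis $n\ge2\chi(G)$ is imposed.
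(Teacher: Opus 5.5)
Your proof is correct and follows the paper's route: combine $\alpha(G)\ge |V(G)|/\chi(G)$ with Theorem~\ref{thm:logbound}, and get monotonicity of the bounding function on $[2,\infty)$ from its derivative. In both arguments the negative term $\frac{1}{4x\ln 3\ln x}$ is bounded by its value at $x=2$, which is $\frac{1}{8\ln 3\ln 2}<\frac14$.
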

	
	\begin{proof}
		Note that $\alpha(G)\ge \frac{|V(G)|}{\chi(G)}$. To apply the bound of Theorem \ref{thm:logbound}, it suffices to establish when the function 
		\[
		\Phi(x)=\frac{x}{2}+\frac{\log_3 x}{8}
		-\frac{1}{4}\log_3\log_3\sqrt{x}
		\]
		is monotonically increasing.
		Write logarithms in base $e$.  Since
		\[
		\log_3\log_3\sqrt{x}
		=
		\frac{1}{\ln 3}\ln\!\left(\frac{\ln x}{2\ln 3}\right),
		\]
		we have
		\[
		\Phi'(x)=
		\frac12+\frac{1}{8x\ln 3}
		-\frac{1}{4x\ln 3\ln x}.
		\]
		For $x\ge 2$,
		\[
		\frac{1}{4x\ln3\ln x}
		\le
		\frac{1}{8\ln3\ln2}
		<\frac14,
		\]
		because $\ln3>1$ and $\ln2>1/2$.  Hence
		\(
		\Phi'(x)>\frac12-\frac14>0.
		\)
		Thus $\Phi$ is monotonically increasing on $[2,\infty)$ and thus the bound of this corollary follows. 
	\end{proof}

\section{Open Problems}
\GG{
Let us define
$$c_{\rm o}:=\inf\{\fo(G)/|V(G)|: \delta(G)\ge 1\} \mbox{ and } c_{\rm oe}:=\inf\{\foe(G)/|V(G)|: \delta(G)\ge 1\}.$$
By \cite{Zeng}, $c_{\rm o}\ge 1/5$ and we proved here that $c_{\rm oe}\ge 1/6$. 
It is well known that $\fo(\overline{C_7})=2$, see, e.g., \cite{Caro1994} and that $2/7$ is the best
known upper bound for $c_{\rm o}$ \cite{AiEtAl2026,Zeng}. We determine $\foe(\overline{C_7})$ below.
}
{
\begin{proposition}\label{prop:lower}
$\foe(\overline{C_7})=2.$
\end{proposition}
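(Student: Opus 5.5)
We need to show two inequalities: $\foe(\overline{C_7})\le 2$ and $\foe(\overline{C_7})\ge 2$.

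\emph{Upper bound.} Since $\foe(G)=\min_\ell h_\ell(G)\le h_{\1}(G)=\fo(G)$, the known fact $\fo(\overline{C_7})=2$ (see \cite{Caro1994}) gives $\foe(\overline{C_7})\le 2$. I will not reprove this; if a self-contained argument is wanted, one checks that $\overline{C_7}$ is $4$-regular with $\alpha=2$ and $\omega=3$. Any odd induced subgraph has an even number of vertices. An odd graph on $4$ vertices must be $K_4$, $2K_2$, or $K_{1,3}$. The first needs a clique of size $4$, and the last two need an independent set of size $3$; neither exists. An odd graph on $6$ vertices would be an induced subgraph of $\overline{C_7}$ minus one vertex, which is $\overline{P_6}$, and a direct check shows $\overline{P_6}$ is not odd.

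\emph{Lower bound.} Fix an arbitrary $\ell:V(\overline{C_7})\to\F_2$; we must find an $\ell$-admissible set of size $2$. A pair $\{u,v\}$ is $\ell$-admissible exactly when either $uv$ is an edge and $\ell(u)=\ell(v)=1$, or $uv$ is a non-edge and $\ell(u)=\ell(v)=0$.

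\begin{itemize}
\item If the set $L_1:=\ell^{-1}(1)$ contains an edge of $\overline{C_7}$, that edge is an admissible pair.
\item If $L_0:=\ell^{-1}(0)$ contains a non-edge of $\overline{C_7}$, i.e.\ an edge of $C_7$, that pair is admissible.
\item Otherwise $L_1$ is independent in $\overline{C_7}$, so $|L_1|\le\alpha(\overline{C_7})=2$. Also $L_0$ is a clique in $\overline{C_7}$, i.e.\ independent in $C_7$, so $|L_0|\le 3$. Then $7=|L_0|+|L_1|\le 5$, a contradiction.
\end{itemize}

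Hence $h_\ell(\overline{C_7})\ge 2$ for every $\ell$, and so $\foe(\overline{C_7})\ge 2$.

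The only real obstacle is the upper bound, which rests on $\fo(\overline{C_7})=2$. I take it as the cited known result, with the short case check above as a backup. The lower bound is a pure pigeonhole argument that uses only $\alpha(\overline{C_7})=2$ and $\alpha(C_7)=3$.
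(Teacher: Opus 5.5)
Your proof is correct and follows the paper's argument. The upper bound comes from $\foe\le\fo$ together with the known value $\fo(\overline{C_7})=2$. The lower bound comes from the observation that a labelling with no admissible pair forces $\ell^{-1}(1)$ to be independent and $\ell^{-1}(0)$ to be a clique, giving $7\le\alpha(\overline{C_7})+\omega(\overline{C_7})=5$.

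Your optional self-contained check of $\fo(\overline{C_7})=2$ has one slip. The graph $2K_2$ has independence number $2$, not $3$, so $\alpha(\overline{C_7})=2$ does not exclude it. Instead, an induced $2K_2$ in $\overline{C_7}$ would be an induced $C_4$ in $C_7$, which is impossible because $C_7$ has girth $7$. The paper's footnote handles this case the same way, by passing to complements in $C_7$.
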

\begin{proof}
Denote $H=\overline{C_7}.$ It is easy to check that the independence number $\alpha(H)=2$ and the clique number $\omega(H)=3.$

Since $\fo(H)=2,$\footnote{\VV{For completeness, here is a short proof that $\fo(H)=2$. Any set $S$ inducing a subgraph with all degrees odd has even order by the handshake lemma. If $|S|=6$, say $S=V(H)\setminus\{v\}$, then the two non-neighbours of $v$ retain their even degree $4$ in $H[S]$. If $|S|=4$, note that the only $4$-vertex graphs with all degrees odd are $2K_2$, $K_{1,3}$ and $K_4$; their complements within $K_4$ are $C_4$, $K_3\cup K_1$ and $\overline{K_4}$, respectively, and none of these occurs as an induced subgraph of $C_7$ on four vertices, since such a subgraph is a disjoint union of paths and $\alpha(C_7)=3$. Finally, any edge of $H$ induces an odd subgraph of order 2.}} $\foe(H)\le 2.$
It remains to prove that $\foe(H)\ge 2$, i.e.,
$h_\ell(H)\ge 2$ for every labelling $\ell:V(H)\to\{0,1\}$.
Observe how two-vertex sets of $H$ behave. If $uv\in E(H)$ then in
$H[\{u,v\}]$ both degrees are 1, so $\{u,v\}$ is $\ell$-admissible
iff $\ell(u)=\ell(v)=1$. If $uv \not\in E(H)$ then both degrees are 0,
so $\{u,v\}$ is $\ell$-admissible iff $\ell(u)=\ell(v)=0$.

Suppose, for a contradiction, that some $\ell$ has $h_\ell(H)\le 1$.
Let $A=\ell^{-1}(1)$ and $B=\ell^{-1}(0)$. By the previous paragraph:
no edge of $H$ has both vertices in $A$ (else that edge is an
admissible $2$-set), so $A$ is an independent set of $H$; and no
non-edge of $H$ has both vertices in $B$, so $B$ is a clique of
$H$. Hence,
$
|A|\le\alpha(H)=2
\text{ and }
|B|\le\omega(H)=3,
$
whence $7=|V(H)|=|A|+|B|\le 5$, a contradiction. Therefore, 
$h_\ell(H)\ge2$ for every $\ell$.
\end{proof}
}

Note that $h_\ell$ is additive over disjoint unions of graphs and the minimum over $\ell$ splits over components, so $\foe$ (and likewise $\fo$) is additive over disjoint unions. Hence $c_{\rm o}$ and $c_{\rm oe}$ are already determined by connected graphs, and disjoint copies of $\overline{C_7}$ show that the ratio $2/7$ is attained by both $\fo(G)/|V(G)|$ and $\foe(G)/|V(G)|$ at every order divisible by~$7$.

Thus, we know that $1/5\le c_{\rm o}\le 2/7$ and $1/6\le \VV{c_{\rm oe}}\le 2/7$ \VV{(the upper bound on $c_{\rm oe}$ follows from Proposition~\ref{prop:lower}, and also from $c_{\rm oe}\le c_{\rm o}$, which holds since $\foe(G)\le \fo(G)$ for every graph $G$)}. What are the exact values of $c_{\rm o}$ and $c_{\rm oe}$?

A computer search (whose results we have not independently verified apart from Proposition~\ref{prop:lower}) reported that the minimum of $\foe(G)$ over all graphs $G$ with $\delta(G)\ge 1$ and $|V(G)|=n$ equals $\lceil 2n/7\rceil$ for every $n\le 10$, that $\overline{C_7}$ is the unique connected $7$-vertex graph with $\foe=2$, and that no graph with a smaller ratio was found by heuristic searches on up to $18$ vertices. This suggests the conjecture that $c_{\rm o}=c_{\rm oe}=2/7$.

\vspace{1mm}

It would be interesting to \VV{improve} the upper bound on $t_2$ and to determine the exact values of $t_2, t_3$ and $t_4$.

	
\end{document}